\documentclass[leqno]{amsart}
\usepackage[a-2u]{pdfx}

\usepackage{lmodern}
\usepackage{iftex}
\ifpdftex
\usepackage[utf8]{inputenc}
\usepackage[T1]{fontenc}
\usepackage{textcomp}
\fi

\usepackage{amsfonts}       
\usepackage{bbding}         
\usepackage{bm}             
\usepackage{graphicx}       
\usepackage{fancyvrb}       
\usepackage{dcolumn}        
\usepackage{booktabs}       
\usepackage{xcolor}         
\usepackage{mathtools}
\usepackage{amssymb}
\usepackage{calc}
\usepackage{enumitem}
\usepackage{dirtytalk}
\usepackage{thmtools}
\usepackage{relsize}
\usepackage[most]{tcolorbox}

\newcommand{\rhoharm}{\rho_{\mathrm{harm}}}
\newcommand{\rhoexp}{\rho_{\mathrm{exp}}}

\tcbset{
    frame code={},
    center title,
    left=0pt,
    right=0pt,
    top=0pt,
    bottom=0pt,
    colback=red!20,
    colframe=white,
    width=\dimexpr\textwidth\relax,
    enlarge left by=0mm,
    boxsep=5pt,
    arc=0pt,outer arc=0pt,
    }

\usepackage{comment}

\hypersetup{unicode}
\hypersetup{breaklinks=true}

\theoremstyle{plain}
\newtheorem{theorem}{Theorem}
\newtheorem{lemma}[theorem]{Lemma}

\newtheorem{corollary}[theorem]{Corollary}

\theoremstyle{definition}

\theoremstyle{remark}

\newcommand{\N}{\mathbb{N}}

\newcommand{\R}{\mathbb{R}}

\newcommand{\M}{M}

\newcommand{\Mi}{\M_{\infty}}

\newcommand{\PriSp}{\Mi\times2^\N}

\newcommand{\ra}{\rightarrow}

\DeclareMathOperator{\diam}{diam}

\DeclareMathOperator{\osc}{osc}

\newcommand{\s}[1]{\{#1\}}
\newcommand{\bs}[1]{\big\{#1\big\}}
\newcommand{\Bs}[1]{\Big\{#1\Big\}}
\newcommand{\p}[1]{(#1)}
\newcommand{\bp}[1]{\big(#1\big)}
\newcommand{\Bp}[1]{\Big(#1\Big)}

\newcommand{\es}{\emptyset}

\newcommand{\cl}[1]{\lceil#1\rceil}

\newcommand{\ab}[1]{|#1|}
\newcommand{\bab}[1]{\big|#1\big|}
\newcommand{\Bab}[1]{\Big|#1\Big|}

\newcommand{\pn}[2]{\|#1\|_{#2}}

\newcommand{\Bpn}[2]{\Big\|#1\Big\|_{#2}}
\newcommand{\pnp}[1]{\pn{#1}{p}}

\newcommand{\Bpnp}[1]{\Bpn{#1}{p}}

\newcommand{\pni}[1]{\pn{#1}{\infty}}

\def \e {\boldsymbol e}
\def \f {\boldsymbol f}
\def \h {\boldsymbol h}
\def \g {\boldsymbol g}
\newcommand{\di}{\,\mathrm{d}}

\newcommand{\app}{{^\wedge}}
\newcommand{\appinf}{{^\wedge\ast}}

\newcommand{\uhr}{{\upharpoonright}}

\newcommand{\overeq}[2]{\stackrel{\mathclap{\normalfont\mbox{$\scriptscriptstyle#2$}}}{#1}}

\makeatletter
\newcommand{\littletaller}{\mathchoice{\vphantom{\big|}}{}{}{}}

\newcommand\@rest[3]{%
    #1%
    \mathclose{%
        \sbox0{$\m@th#2\left.\kern-\nulldelimiterspace\vphantom{#1}\littletaller\right.$}%
        \sbox2{$\m@th#2\upharpoonright$}%
        \dimen0=\ht0
        \advance\dimen0 by -\ht2
        \raisebox{\dimen0}{\usebox2}%
    }_{#3}%
}

\newcommand{\rest}[2]{\mathpalette\@rest@helper{{#1}{#2}}}
\def\@rest@helper#1#2{\@restaux#1#2}
\def\@restaux#1#2#3{\@rest{#2}{#1}{#3}}
\makeatother

\numberwithin{equation}{section}

\begin{document}
\title[Typical martingale diverges on a co-$\sigma$-porous set]{Typical martingale diverges on a co-$\sigma$-porous set}

\author{Antonín Hejný}

\address{Antonín Hejný \\
Charles University\\
Faculty of Mathematics and Physics\\
Prague, Czech Republic}
\email{antonin.hejny@gmail.com}

\keywords{probability,analysis,martingale,porosity,convergence,Cantor}

\begin{abstract}
We consider the space of $L^p$-bounded martingales on the Cantor space for $p\in[1,\infty]$. Equipping the Cantor space with two standard metrics, we show that under both, a typical martingale diverges at a typical point. By a ``typical martingale'', we mean an element of a co-$\sigma$-porous set when $p\in[1,\infty)$ and of a co-porous set when $p=\infty$. Specifically, we show that while a typical martingale diverges on a co-$\sigma$-porous set with respect to the first metric, no martingale diverges on a co-$\sigma$-porous set with respect to the second metric. Finally, we investigate the key factors determining whether the divergence set can be co-$\sigma$-porous.
\end{abstract}

\maketitle
\section{Introduction}

Doob's Martingale Convergence Theorem (see, e.g., \cite[Theorem~1.3.2.8, p.\ 25]{spaces} or \cite[275G]{FR2}) asserts that every $L^1$-bounded martingale converges almost surely with respect to the underlying probability measure. When the probability space is endowed with a natural topological structure, one may also examine the size of the convergence set from the perspective of Baire category. This question was studied in \cite{spu-zel} and \cite{kal-spu} for martingales on the Cantor space $2^\N$, where it was shown that for a comeager family of martingales in $M_p$, the convergence set is topologically meager (i.e., of first Baire category).

The present paper investigates the behavior of a typical martingale from the point of view of $\sigma$-porosity. 

The organization is as follows. In Section~\ref{sec:preliminaries}, we recall the necessary background on martingales, the Cantor space, and porosity. Section~\ref{sec:main_results} contains the statements of the main theorems, whose proofs are detailed in Section~\ref{sec:proofs}.

\section{Preliminaries}
\label{sec:preliminaries}

\subsection{Martingales}
Let $(\Omega,\Sigma,P)$ be a probability space. That is, $\Omega$ is a set, $\Sigma$ a $\sigma$-algebra of subsets of $\Omega$ and $P$ a probability measure defined on the $\sigma$-algebra $\Sigma$. A \emph{filtration} is an increasing sequence $(\Sigma_n)$ of $\sigma$-subalgebras of $\Sigma$. Denote by $\Sigma_\infty$ the $\sigma$-algebra generated by $\bigcup_{n\in\N} \Sigma_n$.

In the sequel, we will suppose that the described objects are fixed.

A \emph{martingale adapted to the filtration $(\Sigma_n)$} is a sequence $\f=(f_n)$ of functions with the following two properties.
\begin{itemize}
	\item $f_n\in L^1(\Omega,\Sigma_n,P|_{\Sigma_n})$ for each $n\in\N$.
	\item $\int_E f_n\di P=\int_E f_m\di P$ whenever $n\le m$ and $E\in \Sigma_n$.
\end{itemize}

In the sequel, we will write $L^1(\Sigma_n)$ for short instead of $L^1(\Omega,\Sigma_n,P|_{\Sigma_n})$ (and similarly for other $L^p$ spaces).
It is easy to check from the definitions that $\|f_n\|_{L^1(\Sigma_n)}\le \|f_m\|_{L^1(\Sigma_m)}$ for $n\le m$.
A martingale $(f_n)$ is called \emph{$L^1$-bounded} if $\sup_n \|f_n\|_{L^1(\Sigma_n)}<\infty$. If the equality in the second condition of the martingale definition is replaced by an inequality $\leq$, we call the random sequence a \emph{submartingale}.

\subsection{Spaces of martingales}
Let $\f=(f_n)$ be a martingale adapted to the sequence $(\Sigma_n)$ and $1\le p\le\infty$. The martingale $\f$ is called \emph{$L^p$-bounded} if $f_n\in L^p(\Sigma_n)$ for each $n\in\N$ and, moreover, $\sup\|f_n\|_{L^p}<\infty$.
The space of all $L^p$-bounded martingales will be denoted by $M_p$. If we equip $M_p$ with the norm
\begin{equation*}
\|\f\|_p\coloneqq\sup_{n\in\N}  \|f_n\|_{L^p},
\end{equation*}
it will become a Banach space.

This definition follows \cite[Definition 1.3.3 on p. 13]{spaces} with notation from \cite[Section 1]{Tr}.

Notice that $\|f_n\|_{L^p}\le\|f_m\|_{L^p}$ whenever $n\le m$. This well-known fact follows, for example, from \cite[Remark~2 on p.~11]{spaces}.

Furthermore, if $\rho$ is a metric on $\Omega$, we define $\M_p\times(\Omega,\rho)$ to be the product space with the metric $((\f,\omega),(\g,\nu))\mapsto\max\s{\pnp{\f-\g},\rho(\omega,\nu)}$.

\subsection{Cantor space}
We denote $[n]\coloneqq\{1,2,\ldots,n\}$ for $n\in \N$, $[n:m]\coloneqq\{n+1,n+2,\ldots,m\}$ for $n,m\in\N_0$ with $n<m$. 
For $\mathfrak C\in \N\cup\{\N\}$ we write $2^{\mathfrak C}\coloneqq\{0,1\}^{\mathfrak C}$ for the respective product set. Similarly, $2^{<\mathfrak C}\coloneqq\bigcup_{c<\mathfrak C}\s{0, 1}^c$ for $\mathfrak C\in\N\cup\s{\N}$ (analogously for $>$) and
$2^{\leq\mathfrak C}\coloneqq\bigcup_{c\leq\mathfrak C}\s{0, 1}^c$ for $\mathfrak C\in\N\cup\s{\N}$ (analogously for $\geq$).
Let $u_{\leq n}\coloneqq(u_k)_{k=1}^n$ for $n\in\N$ and $u\in2^{\leq\N}\cap2^{\geq n}$  (analogously for $u_{<n}$). In addition, $u\app v$ is the concatenation of $u$ and $v$, where $u\in 2^{<\N}$ and $v\in2^{\leq\N}$. 
We write
$u(\app v)^n\coloneqq u\underbrace{\app v\app v\dotsb \app v}_{n\text{ times}}$ for $u, v\in2^{<\N}$ and $n\in\N$ and
$u\appinf\coloneqq\s{v\in 2^{\geq n}\cap2^{\leq\N};v_{\leq n}=u}$ for $u\in 2^{n}$, $n\in\N$. 
Let $\nu\appinf\coloneqq\s{\nu}$ for $\nu\in2^\N$ and
$A\appinf\coloneqq\bigcup\s{u\appinf ; u\in A}$ for $A\subset 2^{\leq\N}$.

Consider $2=\{0,1\}$ with the discrete topology. Then the topological space $2^\N$ is homeomorphic to the usual Cantor set in the unit interval and can be endowed with a metric given as 
\begin{equation*}
\psi_\varphi(\eta, \nu)\coloneqq
\begin{cases}
\max\s{\varphi(n); \eta_n \neq \nu_n,\, n\in\N}, & \text{for } \eta\neq\nu,\\
0, & \text{for } \eta=\nu,
\end{cases}
\end{equation*}
where $\varphi\colon\N\ra\R^+$ is any decreasing function satisfying $\varphi(n)\ra0$. With this metric, the space $2^\N$ becomes an ultrametric compact space. The canonical basis of this compact space is given as $\s{2^\N\cap u\appinf; u\in 2^{<\N}}$. We consider two such metrics: $\rhoharm\coloneqq\psi_{n\mapsto1/n}$ and $\rhoexp\coloneqq\psi_{n\mapsto2^{-n}}$. We denote open balls with respect to those metrics as $B_{\rhoharm}(x,r)$ and $B_{\rhoexp}(x,r)$, where $x$ is the center and $r$ is the radius.

Moreover, we have a canonical probability measure $P$ on $2^\N$ given as the product measure $P = \prod_{n\in\N} P_n$, where $P_n$ is the equally distributed measure on $2$.
For any metric $\rho$ on $2^\N$ and $u, v\in 2^{\leq\N}$, we further set
\begin{align*}
    \rho(u,v)&\coloneqq\sup_{s\in u\appinf,t\in v\appinf}\rho (s,t),\\
    P(v)&\coloneqq P\bp{2^\N\cap v\appinf}.
\end{align*}

When working with the Cantor space, we will use the following simple lemma.

\begin{lemma}
\label{ultrametric_open_balls_of_same_size_lemma}
Let $(X, \psi)$ be an ultrametric space, $x, y\in X$ and $r>0$ be arbitrary. Then either 
\begin{equation*}
B(x, r)=B(y, r)\hspace{30pt}\text{or}\hspace{30pt}B(x, r)\cap B(y, r)=\es.
\end{equation*}
\end{lemma}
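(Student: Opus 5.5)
The plan is to show that in an ultrametric space every point of an open ball is a center of that ball, and then derive the dichotomy from this. Throughout we use the strong triangle inequality $\psi(a,c)\le\max\s{\psi(a,b),\psi(b,c)}$, valid for all $a,b,c\in X$.

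\emph{Step 1: every point of a ball is a center.} Let $z\in B(x,r)$; we show $B(z,r)=B(x,r)$. Take $w\in B(z,r)$. Then
\begin{equation*}
\psi(x,w)\le\max\s{\psi(x,z),\psi(z,w)}<r,
\end{equation*}
so $w\in B(x,r)$, which gives $B(z,r)\subset B(x,r)$. Symmetry of $\psi$ gives $\psi(z,x)<r$, so $x\in B(z,r)$. Running the same argument with the roles of $x$ and $z$ exchanged yields $B(x,r)\subset B(z,r)$, and hence equality.

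\emph{Step 2: the dichotomy.} Suppose $B(x,r)\cap B(y,r)\neq\es$ and pick $z$ in the intersection. By Step~1 applied to the ball around $x$, we get $B(x,r)=B(z,r)$. Applying Step~1 to the ball around $y$ gives $B(y,r)=B(z,r)$. Therefore $B(x,r)=B(y,r)$. If instead the intersection is empty, the second alternative holds, which covers all cases.

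The argument is entirely routine. The only point needing care is that the balls are open, so the strict inequality $<r$ is preserved under the maximum in the strong triangle inequality.
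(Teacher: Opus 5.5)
Your proof is correct. The strong triangle inequality, together with the strict bound $<r$, shows that every point of an open ball is a center of that ball, and the dichotomy follows immediately. The paper states this lemma without proof (calling it simple), and your argument is the standard one it implicitly relies on.
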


\subsection{Martingales on the Cantor space}
Consider now the Cantor set $2^\N$ with the probability measure $P$ and the sequence of $\sigma$-algebras $\Sigma_n=\sigma(\{u\appinf\cap2^\N; u\in 2^n\})$, $n\in\N$. Then $\Sigma_\infty$ equals the family $\mathcal{B}(2^\N)$ of all Borel sets in $2^\N$. Let $\f=(f_n)_{n\in\N}$ be a martingale adapted to the filtration $(\Sigma_n)$.
Then for each $n\in\N$ and $A=u\appinf\in\Sigma_n$, where $u\in 2^n$, we have that $f_n$ is constant on $A$ due to the $\Sigma_n$-measurability of $f_n$ (the set $A$ is an atom of $\Sigma_n$). Hence, we may write $\varphi(u)$ as the constant value of $f_n$ on $2^\N\cap u\appinf$ for any $u\in 2^{<\N}$. By the martingale property of $\f$ we obtain
\begin{equation}
    \label{eq:prumer}
\varphi(u)=\frac12\bp{\varphi(u\app 0)+\varphi(u\app 1)},\quad u\in 2^{<\N}.
\end{equation}
Indeed, $P(u)=2P(u\app 0)=2P(u\app 1)$, and thus for $u\in2^{<\N}$ we have
\[
\begin{aligned}
\varphi(u)&=\frac{1}{P(u)}\int_{2^\N\cap u\appinf} f_n\di P=
\frac{1}{P(u)}\bp{\int_{2^\N\cap u\app 0\appinf} f_{n+1}\di P+\int_{2^\N\cap u\app 1\appinf} f_{n+1}\di P}\\
&=\frac{1}{P(u)}\bp{P(u\app 0)\varphi(u\app 0)+P(u\app 1)\varphi(u\app 1)}=\frac12\bp{\varphi(u\app 0)+\varphi(u\app 1)}.
\end{aligned}
\]

Conversely, if $\varphi\colon2^{<\N}\to \R$ is a function satisfying the property~\eqref{eq:prumer}, then the sequence $\f=(f_n)_{n\in\N}$ defined as
$f_n=\varphi(u)$ on $u\appinf$ for $u\in 2^n$ is a martingale adapted to the filtration $(\Sigma_n)$ (the martingale property easily follows from \eqref{eq:prumer}).

Hence, we may identify the martingales on $(2^\N,\mathcal{B}(2^\N), P)$ adapted to $(\Sigma_n)$ with functions $\varphi\colon2^{<\N}\ra\R$ satisfying \eqref{eq:prumer}.

If $(x_n)$ is a sequence of real numbers, we set

\begin{equation*}
\osc (x_n)\coloneqq\lim_{n\to\infty} \diam(\{x_k; k\ge n\}),
\end{equation*}
where $\diam$ stands for the diameter of the respective set. Hence 
\begin{equation*}
\osc(x_n)=\limsup x_n-\liminf x_n,    
\end{equation*}
provided the right-hand side is defined.

If $\f\colon2^{<\N}\to \R$ is a martingale and $u\in 2^n$ for some $n\in\N$ is given, we define
\[
\f\uhr_{u\appinf}=\begin{cases}
                            \f(v),& v\in 2^{<\N}\cap u\appinf,\\
                            0,& v\in 2^{<\N}\cap (2^n\setminus\{u\})\appinf.\\
                            \end{cases}
\]
The definition of $\f\uhr_{u\appinf}$ on $2^{<n}$ is then given by the formula \eqref{eq:prumer} and the already defined values on $2^n$.

In the sequel, we will need the following easy results on the oscillation of martingales.

\begin{lemma}
\label{near_martingales_lemma}
Let $\f, \g$ be martingales in $\M_\infty$ and $\nu\in 2^\N$. Let also $r>2d>0$ be such that $\|\f-\g\|_\infty<d$ and $\osc f_n(\nu)\geq r$. Then $\osc g_n(\nu)> r-2d$.
\end{lemma}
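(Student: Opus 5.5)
The plan is to compare the two sequences $(f_n(\nu))$ and $(g_n(\nu))$ pointwise and then pass to the oscillation. The key point is that the norm in $\M_\infty$ controls every term uniformly: $\|\f-\g\|_\infty=\sup_n\|f_n-g_n\|_{L^\infty}<d$. On the Cantor space each $f_n-g_n$ is constant on every atom $u\appinf\cap 2^\N$ with $u\in 2^n$, and every such atom has positive measure $P(u)=2^{-n}$. Hence the essential supremum is the true supremum over atoms, and
\[
|f_n(\nu)-g_n(\nu)|=|\f(\nu_{\leq n})-\g(\nu_{\leq n})|\le \|f_n-g_n\|_{L^\infty}\le\|\f-\g\|_\infty<d
\]
for every $n$ and every $\nu\in 2^\N$. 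This is the only step needing any care, namely that an $L^\infty$ bound becomes a pointwise bound because the atoms are non-null.

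Next I would fix $n$ and take $k,l\ge n$. The triangle inequality gives
\[
|f_k(\nu)-f_l(\nu)|\le |g_k(\nu)-g_l(\nu)|+2\|\f-\g\|_\infty.
\]
Taking the supremum over $k,l\ge n$ yields
\[
\diam\{f_k(\nu);k\ge n\}\le \diam\{g_k(\nu);k\ge n\}+2\|\f-\g\|_\infty.
\]

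Finally I let $n\to\infty$. Both diameters are nonincreasing in $n$, so the limits exist in $[0,\infty]$. The limit gives $r\le\osc f_n(\nu)\le \osc g_n(\nu)+2\|\f-\g\|_\infty$. Since $\|\f-\g\|_\infty<d$ is strict, this gives $\osc g_n(\nu)\ge r-2\|\f-\g\|_\infty>r-2d$, which is the claim.

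The infinite cases cause no problem. If $\osc f_n(\nu)=\infty$, the same inequality forces $\osc g_n(\nu)=\infty$, and the conclusion holds trivially. The hypothesis $r>2d$ is not needed for the argument; it only makes the conclusion meaningful, since it guarantees $r-2d>0$.
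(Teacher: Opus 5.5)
Your proof is correct. The paper states this lemma without proof and calls it an ``easy result,'' and your argument is the natural one it leaves to the reader: the pointwise bound $|f_n(\nu)-g_n(\nu)|<d$ (valid because every atom of $\Sigma_n$ has measure $2^{-n}>0$), the triangle inequality on the diameters of the tails, and passage to the limit give $\osc g_n(\nu)\ge r-2\|\f-\g\|_\infty>r-2d$. Your remark about infinite oscillations is harmless but vacuous here, since sequences coming from $\M_\infty$ are bounded by $\|\f\|_\infty$.
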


\begin{lemma}
\label{martingale_sum_oscillation_lemma}
Let $\f,\g\in\M_\infty$ and $\nu\in 2^\N$. Let also $r>d>0$ be such that 
\begin{equation*}
\osc (f_n(\nu))\leq d\hspace{30pt}\text{and}\hspace{30pt}\osc(g_n(\nu))\geq r.
\end{equation*}
Then it holds that
\begin{equation*}
\osc(f_n+g_n)(\nu)\geq r-d.
\end{equation*}
\end{lemma}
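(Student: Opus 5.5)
We derive the statement from Lemma~\ref{near_martingales_lemma} by writing $\f=\g+(\f-\g)$ and using the subadditivity of the oscillation. For real sequences $(x_n)$ and $(y_n)$, the oscillation of a sum is at most the sum of the oscillations, as long as the right-hand side is defined. This holds because for every $N$
\[
\diam\{x_k+y_k;k\ge N\}\le \diam\{x_k;k\ge N\}+\diam\{y_k;k\ge N\},
\]
and the inequality passes to the limit as $N\to\infty$. We apply this with $x_n=g_n(\nu)$ and $y_n=f_n(\nu)$. Since $\g$ and $-\f$ are martingales in $\M_\infty$, all quantities involved are finite and the linear structure is available.

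First, $\osc(f_n(\nu))\le \osc(f_n(\nu)+g_n(\nu))+\osc(-g_n(\nu))$, and $\osc(-g_n(\nu))=\osc(g_n(\nu))\le d$. Hence $\osc(f_n+g_n)(\nu)\ge \osc(f_n(\nu))-d\ge r-d$.

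There is no real obstacle here. The only points to check are that oscillation is invariant under negation and that the diameters are finite. Finiteness follows from $\f,\g\in\M_\infty$: each $f_n$ is bounded by $\pni{\f}$ on its atoms, so the values $\f(u)$ are bounded. Because of this, no $\infty-\infty$ issue arises in the subtraction.
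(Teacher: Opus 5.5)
Your argument is correct in substance. It is the natural one: the paper lists this lemma among its ``easy results'' and gives no proof. Oscillation is subadditive, since the diameter of a tail of a sum of sequences is at most the sum of the tail diameters, and then you subtract.

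Two slips need fixing.

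First, your displayed chain swaps the hypotheses on $\f$ and $\g$. You use $\osc(g_n(\nu))\le d$ and $\osc(f_n(\nu))\ge r$, whereas the statement gives $\osc(f_n(\nu))\le d$ and $\osc(g_n(\nu))\ge r$. The conclusion is symmetric in $\f$ and $\g$, so renaming fixes it:
\[
r\le\osc(g_n(\nu))\le\osc(f_n+g_n)(\nu)+\osc(-f_n(\nu))=\osc(f_n+g_n)(\nu)+\osc(f_n(\nu))\le\osc(f_n+g_n)(\nu)+d .
\]

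Second, your opening sentence says you derive the statement from Lemma~\ref{near_martingales_lemma} by writing $\f=\g+(\f-\g)$. You never actually do this. That lemma could not be applied here anyway, because it needs a uniform bound $\pni{\f-\g}<d$, which is not available. Drop that sentence; the subadditivity argument stands on its own.

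Your discussion of boundedness is harmless but unnecessary. $\osc$ takes values in $[0,\infty]$, and the diameter inequality passes to the limit there. The only quantity you subtract is $\osc(f_n(\nu))\le d<\infty$. So the argument works for arbitrary real sequences, with no use of the martingale structure or of $\M_\infty$.
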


\subsection{Porous and $\sigma$-porous set}
Let $(X, \psi)$ be a metric space and let $x\in X$, $A\subset X$ and $R>0$ be arbitrary. Then we define 
\begin{equation*}
\gamma(A, R, x)=\sup\bp{\s{r>0;\exists y\in X:B(y, r)\subset B(x, R)\setminus A}\cup\s{0}}.
\end{equation*}
Further, we define porosity (a.k.a. lower porosity) of $A$ at $x$ as
\begin{equation*}
p(A, x)=2\cdot\liminf_{R\ra0^+}\frac{\gamma(A, R, x)}{R}.
\end{equation*}

We say that $A$ is porous ($c$-porous) at $x$ if $p(A, x)>0$ ($p(A, x)\geq c$). For $B\subset X$, we say $A$ is porous ($c$-porous) on $B$ if it is porous ($c$-porous) at each point of $B$. We say $A$ is porous ($c$-porous) if it is porous on $X$. We say $A$ is $\sigma$-porous (at $B$) if it is a countable union of porous sets (at $B$). The following two lemmata are simple results that we will use in the sequel.

\begin{lemma}
\label{porosity_condition_lemma}
Let $(X,\psi)$ be a metric space and let $x\in X$, $A\subset X$ and $\alpha>0$. Then $A$ is $2\alpha$-porous at $x$ if and only if
\begin{equation*}
\forall\varepsilon\in(0,\alpha)\exists r_0>0\forall r\in(0,r_0)\exists y\in X:B\bp{y,(\alpha-\varepsilon)\cdot r}\subset B(x,r)\setminus A.
\end{equation*}
\end{lemma}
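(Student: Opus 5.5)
The plan is to unwind the definitions of $p(A,x)$ and $\gamma(A,R,x)$ and show that each side of the equivalence implies the other. By definition, $A$ is $2\alpha$-porous at $x$ exactly when $2\liminf_{R\to0^+}\gamma(A,R,x)/R\ge 2\alpha$, that is, when
\[
\liminf_{R\to0^+}\frac{\gamma(A,R,x)}{R}\ge\alpha .
\]
So the whole argument is a translation between a $\liminf$ inequality and the quantified statement, using the supremum in the definition of $\gamma$.

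For the forward direction, assume the $\liminf$ is at least $\alpha$ and fix $\varepsilon\in(0,\alpha)$.
\begin{itemize}
\item By the definition of $\liminf$ there is $r_0>0$ such that $\gamma(A,r,x)/r>\alpha-\varepsilon/2$ for all $r\in(0,r_0)$.
\item Fix such an $r$. Then $\gamma(A,r,x)>(\alpha-\varepsilon/2)r>(\alpha-\varepsilon)r>0$.
\item Since $\gamma(A,r,x)$ is a supremum of a set of admissible radii (together with $0$), some element of that set exceeds $(\alpha-\varepsilon)r$. So there are $s>(\alpha-\varepsilon)r$ and $y\in X$ with $B(y,s)\subset B(x,r)\setminus A$.
\item Balls are monotone in the radius, so $B\bp{y,(\alpha-\varepsilon)r}\subset B(y,s)$, and this gives the required inclusion.
\end{itemize}
The $\varepsilon/2$ margin is what ensures that a strict inequality is available, so that the supremum really is approximated by an attained radius.

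For the reverse direction, assume the quantified condition holds and let $\varepsilon\in(0,\alpha)$.
\begin{itemize}
\item For every $r\in(0,r_0)$ the ball $B\bp{y,(\alpha-\varepsilon)r}$ witnesses that $(\alpha-\varepsilon)r$ belongs to the set whose supremum is $\gamma(A,r,x)$. Here $(\alpha-\varepsilon)r>0$, so this radius is admissible.
\item Hence $\gamma(A,r,x)/r\ge\alpha-\varepsilon$ on $(0,r_0)$, and therefore $\liminf_{R\to0^+}\gamma(A,R,x)/R\ge\alpha-\varepsilon$.
\item Letting $\varepsilon\to0^+$ gives $\liminf\ge\alpha$, which means $p(A,x)\ge2\alpha$.
\end{itemize}

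There is no real obstacle in this argument. The only points needing care are the strict-versus-nonstrict inequality when the supremum is approximated, handled by the $\varepsilon/2$ trick above, and the monotonicity of balls in the radius.
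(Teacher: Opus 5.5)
Your proof is correct. It is the routine unwinding of the definitions of $\gamma(A,R,x)$ and $p(A,x)$, which is evidently what the paper intends, since it states this lemma among its ``simple results'' without proof. One small point: the $\varepsilon/2$ margin is unnecessary, because $\liminf_{R\to0^+}\gamma(A,R,x)/R\ge\alpha>\alpha-\varepsilon$ already gives $\gamma(A,r,x)>(\alpha-\varepsilon)r$ strictly for all $r$ in some interval $(0,r_0)$.
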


\begin{lemma}
\label{constant_sigma_porosity}
Let $(X,\psi_1)$ and $(X,\psi_2)$ be metric spaces and let $c,d>0$ be such that $c\cdot\psi_1(x,y)\leq\psi_2(x,y)\leq d\cdot\psi_1(x,y)$ for all $x,y\in X$. Then porous sets in $(X,\psi_1)$ and $(X,\psi_2)$ coincide.
\end{lemma}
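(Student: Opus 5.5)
The plan is to compare the relevant quantities directly for the two metrics, using only the definitions of $\gamma$ and $p$. Let $A\subset X$ be porous at $x$ in $(X,\psi_1)$, and write $B_i$ for balls with respect to $\psi_i$. The hypothesis $c\psi_1\le\psi_2\le d\psi_1$ gives the nesting
\[
B_1(z,r/d)\subset B_2(z,r)\subset B_1(z,r/c)
\]
for every $z\in X$ and $r>0$. Indeed, $\psi_1(z,w)<r/d$ implies $\psi_2(z,w)\le d\psi_1(z,w)<r$, and $\psi_2(z,w)<r$ implies $\psi_1(z,w)\le\psi_2(z,w)/c<r/c$.

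Next I would bound $\gamma_2$ from below by $\gamma_1$ at a comparable scale. Fix $R>0$ and put $R'\coloneqq cR$. Suppose $B_1(y,s)\subset B_1(x,R')\setminus A$. The nesting gives
\[
B_2(y,cs)\subset B_1(y,s)\subset B_1(x,cR)\subset B_2(x,R),
\]
and since $B_2(y,cs)\subset B_1(y,s)$, this ball also misses $A$. Taking suprema over such $s$ yields $\gamma_2(A,R,x)\ge c\,\gamma_1(A,cR,x)$. Therefore
\[
\frac{\gamma_2(A,R,x)}{R}\ \ge\ c^2\,\frac{\gamma_1(A,cR,x)}{cR}.
\]
As $R\to0^+$ we also have $cR\to0^+$, so taking $\liminf$ gives $p_2(A,x)\ge c^2\,p_1(A,x)>0$. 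Hence porosity at $x$ with respect to $\psi_1$ implies porosity at $x$ with respect to $\psi_2$.

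The converse follows by symmetry. The hypothesis can be rewritten as $\tfrac1d\psi_2\le\psi_1\le\tfrac1c\psi_2$, so the same argument with the roles of the metrics exchanged gives $p_1(A,x)\ge d^{-2}p_2(A,x)$. Thus $A$ is porous at $x$ in one metric exactly when it is porous at $x$ in the other. Applying this at every point shows that the porous sets coincide, and taking countable unions, so do the $\sigma$-porous sets.

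The only real obstacle is bookkeeping: choosing the inner scale $cR$ so that the big ball fits inside $B_2(x,R)$, while keeping track of the constant on the small ball. There is a degenerate case to mention: if $\gamma_1$ is attained only via the $\{0\}$ option in its definition, the inequality $\gamma_2\ge c\gamma_1$ holds trivially.
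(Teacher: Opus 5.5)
Your overall strategy is the right one: compare balls in the two metrics at comparable scales, then carry the estimate through $\gamma$ and the $\liminf$. The paper gives no proof of this lemma and only calls it simple.

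However, the step you single out as the crux is wrong. The inclusion $B_1(x,cR)\subset B_2(x,R)$ does not follow from the hypothesis. From $\psi_1(x,w)<cR$ you only get $\psi_2(x,w)\le d\,\psi_1(x,w)<cdR$, which is $<R$ only when $cd\le 1$. Your own nesting gives $B_1(x,R/d)\subset B_2(x,R)$, not $B_1(x,cR)\subset B_2(x,R)$.

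Consequently, the intermediate inequality $\gamma_2(A,R,x)\ge c\,\gamma_1(A,cR,x)$ and the conclusion $p_2(A,x)\ge c^2p_1(A,x)$ are false in general. Take $X=\mathbb{R}$, $\psi_1(s,t)=|s-t|$, $\psi_2=2\psi_1$ (so $c=d=2$), $A=\{0\}$ and $x=0$. Then $\gamma_1(A,R,0)=\gamma_2(A,R,0)=R/2$. This gives $c\,\gamma_1(A,cR,0)=2R>R/2=\gamma_2(A,R,0)$, and $p_2=1<4=c^2p_1$.

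The repair is a one-line change: use the inner scale $R/d$ instead of $cR$. If $B_1(y,s)\subset B_1(x,R/d)\setminus A$, then
$B_2(y,cs)\subset B_1(y,s)\subset B_1(x,R/d)\subset B_2(x,R)$, and $B_2(y,cs)$ misses $A$. Hence $\gamma_2(A,R,x)\ge c\,\gamma_1(A,R/d,x)$.

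Write $\gamma_2(A,R,x)/R\ge \frac{c}{d}\cdot\gamma_1(A,R/d,x)/(R/d)$ and let $R\to0^+$; this gives $p_2(A,x)\ge\frac{c}{d}\,p_1(A,x)$. Exchanging the roles of the metrics, with constants $1/d$ and $1/c$, gives $p_1(A,x)\ge\frac{c}{d}\,p_2(A,x)$. The rest of your argument (pointwise equivalence, then countable unions for the $\sigma$-porous case) goes through unchanged.
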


In further text, we will also be using the following theorem, which can be found in e.g. \cite[Proposition~2.2 on p.~512]{Zajicek}.

\begin{theorem}
\label{zajicek_theorem}
Let $(X,\psi)$ be a metric space and let $A\subset X$. Then $A$ is $\sigma$-porous if and only if $A=\bigcup_{n\in\N}F_n$, where each $F= F_n$ satisfies:
\begin{equation*}
\exists\alpha>0\exists r_0>0\forall x\in X\forall r\in(0,r_0)\exists y\in X:B(y,\alpha r)\subset B(x,r)\setminus F.
\end{equation*}
\end{theorem}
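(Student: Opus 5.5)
The plan is to prove the two implications separately. The easy direction is ``$\Leftarrow$''; the substance is converting pointwise porosity into the uniform condition required at every point of $X$.

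\emph{Direction ``$\Leftarrow$''.} Suppose $A=\bigcup_n F_n$, and each $F=F_n$ satisfies the displayed condition with constants $\alpha,r_0$. Fix any $x\in X$ and $r\in(0,r_0)$. The condition gives $y$ with $B(y,\alpha r)\subset B(x,r)\setminus F$, so $\gamma(F,r,x)\ge\alpha r$. Hence $p(F,x)\ge 2\alpha>0$. Thus $F$ is porous at every point, i.e.\ porous, and $A$ is $\sigma$-porous.

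\emph{Direction ``$\Rightarrow$''.} Write $A=\bigcup_k A_k$ with each $A_k$ porous. For $k,m,j\in\N$ put
\[
A_{k,m,j}\coloneqq\bigl\{x\in A_k;\ \forall r\in(0,1/j)\ \exists y\in X: B(y,r/m)\subset B(x,r)\setminus A_k\bigr\}.
\]
First I check that $A_k=\bigcup_{m,j}A_{k,m,j}$. If $x\in A_k$, then $p(A_k,x)>0$, so $p(A_k,x)\ge 2\alpha$ for some $\alpha>0$. Lemma~\ref{porosity_condition_lemma} with $\varepsilon=\alpha/2$ yields $r_0$ such that for every $r<r_0$ there is $y$ with $B(y,\alpha r/2)\subset B(x,r)\setminus A_k$. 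Choose $m$ with $1/m\le\alpha/2$ and $j$ with $1/j\le r_0$; then $x\in A_{k,m,j}$, since shrinking the hole keeps it inside $B(x,r)\setminus A_k$.

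The main step is to show that $F\coloneqq A_{k,m,j}$ satisfies the uniform condition at every $x\in X$, not only at points of $F$. I take $\alpha\coloneqq 1/(2m)$ and $r_0\coloneqq 1/j$, and fix $x\in X$ and $r\in(0,1/j)$. There are two cases.
\begin{itemize}
\item If $B(x,r/2)\cap F=\es$, take $y=x$. Then $B(x,\alpha r)\subset B(x,r/2)\subset B(x,r)\setminus F$, since $\alpha\le 1/2$.
\item Otherwise pick $z\in F$ with $\psi(x,z)<r/2$. By the triangle inequality, $B(z,r/2)\subset B(x,r)$. Since $r/2<1/j$ and $z\in A_{k,m,j}$, there is $y$ with
\[
B\bigl(y,\tfrac{r}{2m}\bigr)\subset B(z,r/2)\setminus A_k\subset B(x,r)\setminus F,
\]
using $F\subset A_k$.
\end{itemize}
In both cases $B(y,\alpha r)\subset B(x,r)\setminus F$. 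So $A=\bigcup_{k,m,j}A_{k,m,j}$ is a countable union of sets of the required form.

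The only delicate point is this uniformization over all $x\in X$, and the recentering at a nearby point $z$ of $F$ handles it. What remains is routine care with open balls, with $\liminf$ versus $\sup$ in the definition of $\gamma$, and with the fact that the supremum need not be attained. Lemma~\ref{porosity_condition_lemma} together with the $\varepsilon$-slack absorbs these issues.
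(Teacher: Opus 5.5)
Your proof is correct. The paper does not prove this statement itself; it cites Zaj\'i\v{c}ek's survey (Proposition~2.2 there). Your argument is the standard proof behind that citation, so it simply makes the paper self-contained. You split each porous $A_k$ into the pieces $A_{k,m,j}$ according to a porosity constant $1/m$ and a threshold radius $1/j$. You then transfer the uniform estimate from points of $F$ to arbitrary $x\in X$ by recentering at some $z\in F$ with $\psi(x,z)<r/2$, at the cost of halving the constant.

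One side benefit is worth noting. In the ``$\Rightarrow$'' direction you only use $p(A_k,x)>0$ for $x\in A_k$, yet the sets $A_{k,m,j}$ you produce are porous at every point of $X$. The paper defines ``porous'' as porous at every point of $X$, whereas the more common convention is porosity at each point of the set itself. Your argument therefore shows that both conventions give the same $\sigma$-porous sets, so the cited result applies verbatim here.
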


Using this theorem, we can readily obtain the following corollary.

\begin{corollary}
\label{zajicek_corollary}
Let $(X,\psi)$ be a metric space and let $A\subset X$. Let also $s_n>0$, $\beta_n\in(0,1)$, $n\in\N$ be such that $\beta_n\ra0$ and $s_n\ra0$. Then $A$ is $\sigma$-porous if and only if $A=\bigcup_{n\in\N}A_n$, where each $A_n$ satisfies:
\begin{equation*}
\forall x\in X\forall r\in(0,s_n)\exists y\in X:\overline{B\p{y,\beta_nr}}\subset B(x,r)\setminus A_n.
\end{equation*}
\end{corollary}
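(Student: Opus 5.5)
Both implications rest on Theorem~\ref{zajicek_theorem}.

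\emph{The ``if'' direction.} Suppose $A=\bigcup_n A_n$ where each $A_n$ has the stated property with parameters $s_n,\beta_n$. For a fixed $n$, set $\alpha=\beta_n$ and $r_0=s_n$. For every $x\in X$ and every $r\in(0,r_0)$ the hypothesis gives a $y$ with
\[
B(y,\beta_n r)\subset \overline{B(y,\beta_n r)}\subset B(x,r)\setminus A_n .
\]
So $F_n=A_n$ satisfies the condition of Theorem~\ref{zajicek_theorem}, and $A$ is $\sigma$-porous.

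\emph{The ``only if'' direction.} Suppose $A$ is $\sigma$-porous. Theorem~\ref{zajicek_theorem} gives $A=\bigcup_k F_k$, where each $F_k$ comes with constants $\alpha_k>0$ and $r_k>0$. Two things must change: the open ball must become a closed one, and the constants must be the prescribed $\beta_n,s_n$ rather than our own. We adjust them as follows.

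\emph{Closing the ball.} Halve the radius. Given $x$ and $r<r_k$, take $y$ with $B(y,\alpha_k r)\subset B(x,r)\setminus F_k$. Then
\[
\overline{B(y,\alpha_k r/2)}\subset\{z:\psi(y,z)\le\alpha_k r/2\}\subset B(y,\alpha_k r)\subset B(x,r)\setminus F_k .
\]
So each $F_k$ satisfies the closed-ball condition with $\alpha_k/2$ in place of $\alpha_k$. We may also shrink $\alpha_k$ further, which keeps the condition true, and assume $\alpha_k<1$.

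\emph{Matching the prescribed constants.} We allocate sets to indices. Since $\beta_n\to0$ and $s_n\to0$, for each $k$ the set
\[
I_k=\{n:\ \beta_n\le\alpha_k/2\text{ and } s_n\le r_k\}
\]
is cofinite, hence infinite. Choose pairwise distinct indices $n_k\in I_k$ inductively, always taking the least unused element of $I_k$. Put $A_{n_k}=F_k$ and $A_n=\emptyset$ for every $n$ not of the form $n_k$.

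\emph{Checking the condition for each $A_n$.}
\begin{itemize}
\item For $n=n_k$: given $x$ and $r<s_n\le r_k$, the step above gives $y$ with $\overline{B(y,\alpha_k r/2)}\subset B(x,r)\setminus F_k$. Since $\beta_n\le\alpha_k/2$, also $\overline{B(y,\beta_n r)}\subset B(x,r)\setminus A_n$.
\item For $A_n=\emptyset$: take $y=x$. Because $\beta_n<1$, we get $\overline{B(x,\beta_n r)}\subset\{z:\psi(x,z)\le\beta_n r\}\subset B(x,r)$.
\end{itemize}
Finally $\bigcup_n A_n=\bigcup_k F_k=A$.

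The only delicate point is the bookkeeping in the allocation step. It needs $I_k$ to be infinite, which holds because $\beta_n\to0$ and $s_n\to0$. It also needs the closed ball to sit inside the open one, which is why we halve the radius and use $\beta_n<1$.
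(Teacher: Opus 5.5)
Your proof is correct and is essentially the routine derivation from Theorem~\ref{zajicek_theorem} that the paper intends; the paper gives no separate proof and only says the corollary follows readily. Your ``if'' direction works because each open ball sits inside its closure. For the ``only if'' direction, your three adjustments are exactly the required bookkeeping: halve the radius to pass to closed balls, assign the $F_k$ to distinct indices $n_k$ with $\beta_{n_k}\le\alpha_k/2$ and $s_{n_k}\le r_k$, and pad the remaining indices with empty sets, which satisfy the condition because $\beta_n<1$.
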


\begin{lemma}
\label{porous_zero_measure_cantor_lemma}
Let $\alpha>0$ and $r_0>0$ be arbitrary. Let also $F\subset 2^\N$ be such that
\begin{equation}
\label{porous_zero_measure_cantor_lemma:assumption}
\forall x\in 2^\N\forall r\in(0,r_0)\exists y\in 2^\N:B_{\rhoexp}(y,\alpha r)\subset B_{\rhoexp}(x,r)\setminus F.
\end{equation}
Then $F$ is measurable and $P(F)=0$.
\end{lemma}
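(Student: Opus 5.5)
We will show that $F$ is closed and that every cylinder loses a fixed proportion of its measure to the complement of $F$ at each level; iterating this gives $P(F)=0$.

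\emph{Step 1: $F$ is closed, hence measurable.} Let $x\notin \overline F$ fail, i.e.\ suppose $x\in\overline F\setminus F$. The hypothesis \eqref{porous_zero_measure_cantor_lemma:assumption} applied at $x$ only guarantees holes near $x$, so this route does not directly give closedness. Instead we replace $F$ by $\overline F$. For $r<r_0$ and any $x$, the hypothesis gives $y$ with $B_{\rhoexp}(y,\alpha r)\subset B_{\rhoexp}(x,r)\setminus F$. Since $B_{\rhoexp}(y,\alpha r)$ is open and disjoint from $F$, it is also disjoint from $\overline F$. Thus $\overline F$ satisfies the same condition. It therefore suffices to prove $P(\overline F)=0$: then $F$ is a subset of a closed null set, hence measurable with respect to the completed measure, with $P(F)=0$. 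If measurability is meant in the Borel sense, we can state the result for $\overline F$, and any Borel $F$ is then handled as well. From now on we assume $F$ is closed.

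\emph{Step 2: translating balls into cylinders.} Under $\rhoexp$, the open ball $B_{\rhoexp}(x,2^{-n})$ consists of the points agreeing with $x$ in the first $n$ coordinates, i.e.\ the cylinder $x_{\le n}\appinf\cap 2^\N$, of measure $2^{-n}$. Fix $k\in\N$ with $2^{-k}\le\alpha$; more precisely, choose $k$ such that any ball of radius $\alpha 2^{-n}$ contains a cylinder of length $n+k$. Indeed, $B_{\rhoexp}(y,\alpha 2^{-n})\supset B_{\rhoexp}(y,2^{-(n+k)})=y_{\le n+k}\appinf$ once $2^{-(n+k)}\le \alpha 2^{-n}$. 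Now take $n$ with $2^{-n}<r_0$ and any $u\in 2^n$, and apply the hypothesis with $x\in u\appinf$ and $r=2^{-n}$. This yields $v\in 2^{n+k}$ extending $u$ with $v\appinf\cap F=\emptyset$. Consequently
\[
P\bp{F\cap u\appinf}\le (1-2^{-k})P(u).
\]

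\emph{Step 3: iteration.} Fix $n_0$ with $2^{-n_0}<r_0$. We show by induction on $j$ that for every $u\in 2^{n_0}$,
\[
P\bp{F\cap u\appinf}\le (1-2^{-k})^j P(u).
\]
For the inductive step, write $F\cap u\appinf$ as the union over $w\in 2^{n_0+jk}$ extending $u$ of $F\cap w\appinf$. Applying the bound to each such $w$ (valid since $|w|\ge n_0$) refines the estimate level by level. Precisely, let $m_j=\sum_{w\in 2^{n_0+jk}} P(F\cap w\appinf)$, where only cylinders meeting $F$ contribute. Each such cylinder $w\appinf$ contains a subcylinder of relative measure $2^{-k}$ disjoint from $F$. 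Hence the union of the cylinders of level $n_0+(j+1)k$ meeting $F$ has measure at most $(1-2^{-k})$ times the measure of the union at level $n_0+jk$. Since $F\subset$ (union of level-$N$ cylinders meeting $F$) for every $N$, letting $j\to\infty$ gives $P(F)=0$.

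The main obstacle is the measurability subtlety handled in Step 1. The measure estimate itself is routine once balls are identified with cylinders.
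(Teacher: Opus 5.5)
Your proof is correct and essentially matches the paper's argument. The paper likewise fixes $k$ with $2^{-k}\le\alpha$ and covers $F$ by cylinders (balls of radius $2^{-kn}$). From each surviving cylinder it removes one subcylinder $k$ levels deeper that misses $F$, and it bounds the measure of the cover by $(1-2^{-k})^{n-j}\to0$.

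The only difference is how measurability is handled. The paper notes that $F$ lies in the null set $\bigcap_n\bigcup\mathcal G_n$ and invokes completeness of $P$, whereas you first pass to $\overline F$. That detour is harmless, but your Step 1 heading overstates it: $F$ itself need not be closed or even Borel, so measurability is necessarily meant in the completed sense.
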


\begin{proof}
Let us choose any $k, j\in\N$ such that 
\begin{equation}
\label{porous_zero_measure_cantor_lemma:kj_def}
2^{-k}\leq\alpha\hspace{40pt}\text{and}\hspace{40pt}2^{-kj}\leq r_0. 
\end{equation}
We construct $(\mathcal G_n)_{n\geq j}$ such that
\begin{flalign*}
&\forall n\geq j\ \forall G\in \mathcal G_n\ \exists y\in 2^\N:G=B_{\rhoexp}(y, 2^{-kn}),
\tag{I}\label{porous_zero_measure_cantor_lemma:I}&\\
&\forall n\geq j\ \forall G\in \mathcal G_n:P(G)=2^{-kn},
\tag{II}\label{porous_zero_measure_cantor_lemma:II}&\\
&\forall n\geq j: |\mathcal G_n|\leq(2^k-1)^{n-j}\cdot 2^{kj},
\tag{III}\label{porous_zero_measure_cantor_lemma:III}&\\
&F\subset \bigcap_{n\geq j} \bigcup \mathcal G_n,
\tag{IV}\label{porous_zero_measure_cantor_lemma:IV}
\end{flalign*}
which will conclude the proof, since $P$ is complete and it holds that
\begin{align*}
P(F)&\overeq{\leq}{(\text{\ref{porous_zero_measure_cantor_lemma:IV}})} P\bp{\bigcap_{n\geq j} \bigcup \mathcal G_n}\leq \limsup_{n\ra\infty} P\bp{\bigcup \mathcal G_n}
\overeq{\overeq{\leq}{(\text{\ref{porous_zero_measure_cantor_lemma:III}})}}{(\text{\ref{porous_zero_measure_cantor_lemma:II}}),} \limsup_{n\ra\infty} \frac{(2^k-1)^{n-j}\cdot 2^{kj}}{2^{kn}} \\
&=\limsup_{n\ra\infty}\Bp{1-\frac{1}{2^k}}^{n-j}
=0.
\end{align*}
So, let us proceed with the construction. We set 
\begin{equation}
\label{porous_zero_measure_cantor_lemma:G_def}
\mathcal G_{j}\coloneqq\bs{B_{\rhoexp}(x, 2^{-kj});x\in2^\N}.
\end{equation}
By Lemma \ref{ultrametric_open_balls_of_same_size_lemma}, $\mathcal G_{j}$ is a system of pairwise disjoint sets. Let $n>j$ arbitrary, assuming that $\mathcal G_{n-1}$ is already constructed. Then for each $G\in\mathcal G_{n-1}$, we may use \eqref{porous_zero_measure_cantor_lemma:I} to express it as $B_{\rhoexp}(x,2^{-k(n-1)})$ for some $x\in2^\N$. So, using \eqref{porous_zero_measure_cantor_lemma:assumption}, we may choose $y_G\in 2^\N$ such that 
\begin{equation*}
B_{\rhoexp}(y_G,2^{-kn})
\overeq{\subset}{\eqref{porous_zero_measure_cantor_lemma:assumption}} B_{\rhoexp}\bp{x,\frac{1}{\alpha}\cdot2^{-kn}}\setminus F
\ \overeq{\subset}{\eqref{porous_zero_measure_cantor_lemma:kj_def}} B_{\rhoexp}\bp{x, 2^{-k(n-1)}}\setminus F
=G\setminus F.
\end{equation*}
For such $G$, let us further define 
\begin{equation*}
\mathcal H_G\coloneqq\bs{B_{\rhoexp}(z, 2^{-kn});z\in G\setminus B_{\rhoexp}(y_G, 2^{-kn})}.
\end{equation*}
Then we finish the construction by setting
\begin{equation*}
    \mathcal G_n=\bigcup_{G\in\mathcal G_{n-1}}\mathcal H_G.
\end{equation*}

Let us further observe that for any $n\geq j$ and $G\in\mathcal G_n$, Lemma \ref{ultrametric_open_balls_of_same_size_lemma} guarantees us that
\begin{equation}
\label{porous_zero_measure_cantor_lemma:union}
\bigsqcup\mathcal H_G=G\setminus B_{\rhoexp}(y_G,2^{-kn}).
\end{equation}
We now finish the proof by showing that the individual properties \eqref{porous_zero_measure_cantor_lemma:I}-\eqref{porous_zero_measure_cantor_lemma:IV} hold.
\begin{itemize}
\item[``\eqref{porous_zero_measure_cantor_lemma:I}'']
This property follows from the definition of $\mathcal H_G$ and from \eqref{porous_zero_measure_cantor_lemma:G_def}.

\item[``\eqref{porous_zero_measure_cantor_lemma:II}'']
This property follows from \eqref{porous_zero_measure_cantor_lemma:I} and the definition of the Cantor space.

\item[``\eqref{porous_zero_measure_cantor_lemma:III}'']
In order to prove that this property holds, let us use \eqref{porous_zero_measure_cantor_lemma:G_def} and Lemma \ref{ultrametric_open_balls_of_same_size_lemma} to deduce that
\begin{equation*}
\ab{\mathcal G_j}=\bab{\bs{2^\N\cap u\appinf;u\in2^{kj}}}=2^{kj}.
\end{equation*}
From (\ref{porous_zero_measure_cantor_lemma:union}), we get that each $\mathcal H_G$ has $2^k-1$ elements. Hence, it holds that
\begin{equation*}
\ab{\mathcal G_n}=(2^k-1)\cdot\ab{\mathcal G_{n-1}},\hspace{15pt}n>j,
\end{equation*}
giving us \eqref{porous_zero_measure_cantor_lemma:III} by induction.

\item[``\eqref{porous_zero_measure_cantor_lemma:IV}'']
This property holds since we began with the entire $2^\N$ and in each step, $\bigcup \mathcal G_n$ is reduced only by some subset of $2^\N\setminus F$.
\end{itemize}
\end{proof}

\begin{lemma}
\label{sigma_porous_zero_measure_cantor_lemma}
Every $\sigma$-porous set in the Cantor space has zero measure.
\end{lemma}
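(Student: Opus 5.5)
The plan is to reduce to Lemma~\ref{porous_zero_measure_cantor_lemma} via Theorem~\ref{zajicek_theorem}. I read the statement with respect to the metric $\rhoexp$, since that is the metric in which Lemma~\ref{porous_zero_measure_cantor_lemma} is formulated. It cannot hold for $\rhoharm$: by Doob's theorem the divergence set of any $L^1$-bounded martingale is null, so if a martingale diverges on a co-$\sigma$-porous set, the corresponding $\sigma$-porous set has full measure.

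Let $A\subset 2^\N$ be $\sigma$-porous in $(2^\N,\rhoexp)$. By Theorem~\ref{zajicek_theorem}, applied to $X=2^\N$ and $\psi=\rhoexp$, we can write $A=\bigcup_{n\in\N}F_n$. Each $F=F_n$ comes with some $\alpha_n>0$ and $r_n>0$ such that for every $x\in 2^\N$ and every $r\in(0,r_n)$ there is $y\in 2^\N$ with
\[
B_{\rhoexp}(y,\alpha_n r)\subset B_{\rhoexp}(x,r)\setminus F_n .
\]
This is exactly the hypothesis \eqref{porous_zero_measure_cantor_lemma:assumption} of Lemma~\ref{porous_zero_measure_cantor_lemma}, with $\alpha=\alpha_n$ and $r_0=r_n$.

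Lemma~\ref{porous_zero_measure_cantor_lemma} then shows that each $F_n$ is measurable with $P(F_n)=0$. Here measurability is with respect to the completion of $P$, which is harmless because the lemma's proof already relies on completeness. By countable subadditivity, $A=\bigcup_n F_n$ is measurable and
\[
P(A)\le\sum_{n}P(F_n)=0 .
\]

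There is no real obstacle, since all the work is done in Lemma~\ref{porous_zero_measure_cantor_lemma}. The only points needing care are the following.
\begin{itemize}
\item The reformulation from Theorem~\ref{zajicek_theorem} gives uniform porosity constants on each piece, and this uniformity is essential for the lemma.
\item The sets $F_n$ need not be Borel, so completeness of $P$ is needed for the measurability claim.
\end{itemize}
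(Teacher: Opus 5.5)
Your proof is correct and follows the paper's argument exactly. You decompose the set via Theorem~\ref{zajicek_theorem} into pieces with uniform porosity constants, apply Lemma~\ref{porous_zero_measure_cantor_lemma} to each piece, and conclude by countable subadditivity. Your remark that the statement must be read in $(2^\N,\rhoexp)$ is also right, since Theorem~\ref{m_infty_is_co_porous} together with Doob's theorem yields $\sigma$-porous sets of full measure in $(2^\N,\rhoharm)$.
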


\begin{proof}
Let $F\subset2^\N$ be $\sigma$-porous. Let $F_n,n\in\N$ be the sets guaranteed by Theorem \ref{zajicek_theorem}. Then from Lemma \ref{porous_zero_measure_cantor_lemma} and the $\sigma$-additivity of $P$, we get that
\begin{equation*}
    P(F)\leq\sum_{n\in\N}P(F_n)=\sum_{n\in\N}0=0.
\end{equation*}
\end{proof}

\section{Divergence of a typical martingale}
\label{sec:main_results}

In this section, we state all the important results. We prove them in the following section. These results are an attempt to strengthen the results of \cite{kal-spu}. Although a slightly more general context was used in \cite{kal-spu}, we interpret their results in terms of $2^\N$. Since it turns out that different topologically equivalent metrics on the Cantor space yield different results, we will consider each of the metrics separately.

\subsection{Metric derived from $\frac{1}{n}$}
We start with the $p=\infty$ case. In this scenario, \cite[Theorem~4.6 on p.~7]{kal-spu} states that the set
\begin{equation*}
    \bs{\f\in \M_\infty; \s{\nu\in2^\N;\osc f_n(\nu)>0} \text{ is comeager}}
\end{equation*}
is comeager in $\M_\infty$. In the following theorem, we show that even more martingales diverge on an even greater set.

\begin{restatable}{theorem}{MinftyIsCoPorous}
\label{m_infty_is_co_porous}
The set
\begin{equation*}
R\coloneqq\bs{\f\in \M_\infty; \s{\nu\in2^\N;\osc f_n(\nu)>0} \text{ is co-$\sigma$-porous in $(2^\N,\rhoharm)$}}
\end{equation*}
is co-1-porous in $\M_\infty$ (i.e., it is the complement of a $1$-porous set).
\end{restatable}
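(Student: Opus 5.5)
Throughout, $\M_\infty$ carries the norm $\|\cdot\|_\infty$. The plan is to verify the criterion of Lemma~\ref{porosity_condition_lemma} with $\alpha=\tfrac12$ at every $\f\in\M_\infty\setminus R$. Concretely, fix $\f\in\M_\infty$, $\varepsilon\in(0,\tfrac12)$ and $r>0$. We want a martingale $\g$ with $\|\g-\f\|_\infty\le r/2$ such that
\[
B\bp{\g,(\tfrac12-\varepsilon)r}\subset B(\f,r)\cap R .
\]
Set $c\coloneqq(\tfrac12-\tfrac{\varepsilon}{2})r$ and $d\coloneqq(\tfrac12-\varepsilon)r$. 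By Lemma~\ref{near_martingales_lemma}, with the oscillation threshold $2c>2d$, every $\g'$ with $\|\g'-\g\|_\infty<d$ satisfies $\osc g'_n(\nu)>0$ at every $\nu$ with $\osc g_n(\nu)\ge 2c$. Hence it suffices to build $\g$ with $\|\g-\f\|_\infty\le c$ such that
\[
D\coloneqq\s{\nu\in2^\N;\ \osc g_n(\nu)\ge 2c}
\]
is co-$\sigma$-porous in $(2^\N,\rhoharm)$. Then the whole ball around $\g$ lies in $R$, and the containment in $B(\f,r)$ is automatic since $c+d<r$. So the statement reduces to a single construction, uniform in $\f$.

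I would look for $\g=\f+\h$ with $\|\h\|_\infty\le c$, where $\h$ is built node by node along $2^{<\N}$. The key tool is the following observation. At a node $u$, write $\delta=f(u\app0)-f(u)=-(f(u\app1)-f(u))$. If we choose the increment of $\h$ with the same sign as $\delta$ (an arbitrary sign if $\delta=0$), then $|\Delta g|\ge|\Delta h|$ on both children. So an increment of $\h$ never gets cancelled by $\f$. This is what makes the construction independent of how badly $\f$ itself behaves, and it is why Lemma~\ref{martingale_sum_oscillation_lemma} is not needed.

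The difficulty is that a martingale bounded by $c$ cannot make full swings $-c\leftrightarrow c$ at every point. It converges almost surely, so $D$ must be a null set; this is consistent with Lemma~\ref{sigma_porous_zero_measure_cantor_lemma}. The plan is therefore to let $\h$ move between $\pm c$ in a controlled way. From a value $a$, the step has size $c-|a|$ in the sign-aligned direction. The points where $\h$ fails to swing fully infinitely often are those whose digits eventually follow, with only rare exceptions, the child determined by the construction. I would arrange the swings along blocks of levels $[n_k:n_{k+1}]$ with $n_{k+1}\approx Kn_k$, and require a full swing within each block unless the path follows a prescribed word of length proportional to $n_k$.

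The exceptional set is then a countable union of sets $A_m$. Each $A_m$ consists of points whose prefix, from level $n_k$ onwards for all $k\ge m$, avoids a free choice of length linear in $n_k$. This linear length is exactly what porosity in $\rhoharm$ requires, since a ball of radius $1/n$ is a cylinder of depth about $n$, and a hole of relative size $\beta$ corresponds to fixing about $n/\beta$ further digits. So each $A_m$ should satisfy the condition of Corollary~\ref{zajicek_corollary} with some fixed $\beta_m$. I expect this step to be the main obstacle: making the bookkeeping of $\h$ near $\pm c$ exact enough that the non-oscillation set reduces to such linear-length pattern conditions. This is also where the harmonic metric is essential, since in $\rhoexp$ the analogous sets would need exponentially long patterns and would fail to be porous.
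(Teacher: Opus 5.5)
\textbf{Verdict: genuine gap.} Your reduction is sound and is the same skeleton the paper uses. Lemma~\ref{near_martingales_lemma} reduces everything to building $\g=\f+\h$ with $\pni{\h}\le c$ and $\osc g_n\ge 2c$ on a co-$\sigma$-porous subset of $(2^\N,\rhoharm)$. The increments of $\h$ are sign-aligned with those of $\f$, and porosity in $\rhoharm$ comes from extensions of linear length.

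\textbf{The missing core.} The proof itself is absent: you never define $\h$ or show that the non-oscillation set is $\sigma$-porous. The one concrete rule you give fails. With step $c-|a|$, at every node one child (both, if $a=0$) takes the value $\pm c$ exactly. A martingale with $|\h|\le c$ that equals $\pm c$ at a node is constant on the whole subtree below it. So $\h$ is eventually constant along every branch except at most one, and $\g$ gains no oscillation.

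More generally, $|\h|\le c$ bounds how fast $\h$ can leave the endpoints. If $\h(v)=c-\delta$, each child satisfies $\h(v\app x)=2\h(v)-\h(v\app y)\ge c-2\delta$, so the distance to $\pm c$ at most doubles per step. Hence any rule letting $\h$ approach $\pm c$ faster than geometrically in the depth destroys the linear-length recovery you need. This "bookkeeping near $\pm c$", which you flag as the main obstacle, is exactly the missing idea.

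\textbf{The paper's fix.} Take the step $\tfrac12(c-|\h(v)|)$, half the distance to the nearer endpoint, going up on the child where $\f$ does not decrease. For $\h\ge0$, an up-step halves the distance to $c$ and a down-step multiplies it by $\tfrac32$; the case $\h<0$ is symmetric. Starting from $\h(\es)=0$, the distance of $\h$ to $c$ at depth $n$ is therefore at least $c2^{-n}$.

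Now take $u\in2^m$ with $\h(u)\ge0$ (the other case is symmetric) and build three segments:
\begin{enumerate}
\item $m$ up-steps give $s_1$ with $\h(u\app s_1)\ge c(1-2^{-m})$.
\item $4m$ down-steps give $t_1$ with $\h(u\app s_1\app t_1)\le0$, since $(3/2)^{4m}\ge 2^{2m}$.
\item $6m$ further down-steps give $s_2$ with $\h\le -c(1-2^{-6m})$.
\end{enumerate}
Each segment is a monotone path along which $\f$ moves in the same direction as $\h$. Your one-step inequality $|\Delta g|\ge|\Delta h|$ transfers to multi-step swings only along such monotone paths. Hence $w\coloneqq u\app s_1\app t_1\app s_2\in2^{12m}$ satisfies $\g(u\app s_1)-\g(w)\ge 2c(1-2^{-m})$.

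A cylinder of depth $12m$ is a $\rhoharm$-ball whose radius is about $\tfrac1{12}$ of that of $u\appinf$. So the union $\mathcal B_k$ of the cylinders $(w^u)\appinf$ over all $u\in2^m$ with $m\ge k$ is co-porous. Then $\bigcap_k\mathcal B_k$ is a co-$\sigma$-porous set on which $\osc g_n\ge 2c$, and no block decomposition is needed.
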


In any reasonable setting, the above theorem cannot be generalized in any way by replacing \emph{co-$\sigma$-porous} by \emph{co-porous} (not even by \emph{co-nowhere dense}) in the definition of $R$, as follows from the following theorem.

\begin{theorem}
\label{martingales_converge_on_dense_set_theorem}
Let $(S, \zeta)$ be a metric space, $\mathcal B(S)$ its Borel $\sigma$-algebra, and $\mu$ a probability measure on $\mathcal B(S)$. Assume that every non-empty open set in $(S, \zeta)$ has non-zero measure. Then for every non-empty open set $U\subset S$, any martingale $\f$ on $(S, \mathcal B(S),\mu)$ that is $L^p$-bounded for some $p\in[1,\infty]$ converges at some element of $U$ (i.e., the set of convergence points of $\f$ is dense in $S$).
\end{theorem}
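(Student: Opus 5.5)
The plan is to reduce everything to Doob's Martingale Convergence Theorem combined with the full-support assumption on $\mu$. The statement is purely measure-theoretic in disguise: the only topological input needed is that non-empty open sets carry positive measure. So I do not expect to need any of the porosity machinery developed above.

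First, I fix the filtration $(\Sigma_n)$ of $\sigma$-subalgebras of $\mathcal B(S)$ to which $\f$ is adapted. I then note that $L^p$-boundedness for some $p\in[1,\infty]$ implies $L^1$-boundedness. Indeed, since $\mu$ is a probability measure, H\"older's inequality gives $\|f_n\|_{L^1}\le\|f_n\|_{L^p}$, so $\sup_n\|f_n\|_{L^1}\le\|\f\|_p<\infty$.

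Second, I fix a representative of each $f_n$, that is, a genuine $\Sigma_n$-measurable, hence Borel, function. I then set
\begin{equation*}
C\coloneqq\bs{\omega\in S;\ \limsup_{n\to\infty} f_n(\omega)=\liminf_{n\to\infty} f_n(\omega)\in\R}.
\end{equation*}
Since $\limsup f_n$ and $\liminf f_n$ are Borel functions with values in $[-\infty,\infty]$, the set $C$ is Borel. By Doob's Martingale Convergence Theorem, $\f$ converges $\mu$-almost surely, so $\mu(C)=1$ and hence $\mu(S\setminus C)=0$.

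Third, I take an arbitrary non-empty open set $U\subset S$. By assumption $\mu(U)>0$, so
\begin{equation*}
\mu(U\cap C)=\mu(U)-\mu(U\setminus C)\ge\mu(U)-\mu(S\setminus C)=\mu(U)>0.
\end{equation*}
In particular $U\cap C\neq\es$, so $\f$ converges at some point of $U$, and $C$ is dense in $S$.

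The only point requiring care, and the step I would treat as the main obstacle, is that martingale elements are a priori equivalence classes, so ``converges at a point'' depends on the chosen representatives. I would address this by stating that the conclusion holds for any fixed choice of representatives: changing representatives alters $C$ only on a countable union of null sets, so $\mu(C)=1$ still holds and the argument above is unaffected. In the Cantor-space setting of this paper the issue disappears entirely. There every $f_n$ is constant on the atoms $2^\N\cap u\appinf$, $u\in 2^n$, each of which has positive measure, so the pointwise values are canonically determined.
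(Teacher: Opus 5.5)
Your proposal is correct and follows the paper's proof: $L^1$-boundedness from $\|\cdot\|_1\le\|\cdot\|_p$, almost-sure convergence by Doob's theorem, and $\mu(U)>0$ forcing $U$ to meet the convergence set. Your remarks on the Borel measurability of $C$ and on the choice of representatives are sound details that the paper leaves implicit.
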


\begin{proof}
From the inequality $\pn{\cdot}{1}\leq\pnp{\cdot}$, we get that $\f$ is $L^1$-bounded. Since any non-empty open $U\subset S$ has non-zero measure, we may use Doob's Martingale Convergence Theorem to conclude that $\f$ converges almost surely, and hence converges at least at one element of $U$.
\end{proof}

Naturally, we would expect a similar result as Theorem \ref{m_infty_is_co_porous} to hold even for $p\in[1,\infty)$. Indeed, in the second part of \cite[Theorem 4.1 on p. 5]{kal-spu}, it is stated that for all $p\in[1,\infty)$,
\begin{equation*}
\bs{\f\in\M_p;\s{\eta\in2^\N;\limsup f_n(\eta)=\infty\wedge\liminf f_n(\eta)=-\infty}\text{ is comeager}}
\end{equation*}
is comeager in $\M_p$. In this direction, we were able to make only partial progress. We show that if we keep the requirement that the set of divergence is comeager, the resulting set is co-$\sigma$-porous.

\begin{restatable}{theorem}{MpIsCoSigmaPorous}
\label{m_p_is_co_sigma_porous}
Let $p\in[1, \infty)$ be arbitrary. Then
\begin{equation*}
\bs{\f\in\M_p;\s{\eta\in2^\N;\osc f_n(\eta)=\infty}\text{ is comeager in }(2^\N,\rhoharm)}
\end{equation*}
is co-$\sigma$-porous in $\M_p$.
\end{restatable}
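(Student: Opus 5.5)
We will show that the complement
\begin{equation*}
N\coloneqq\bs{\f\in\M_p;\s{\eta\in2^\N;\osc f_n(\eta)=\infty}\text{ is not comeager}}
\end{equation*}
is $\sigma$-porous in $\M_p$. First we reduce to countably many conditions. Note that $\osc f_n(\eta)=\infty$ holds if and only if $\eta\in\bigcap_{k\in\N}D_k(\f)$, where
\begin{equation*}
D_k(\f)\coloneqq\s{\eta;\exists m<n\text{ with }|f_n(\eta)-f_m(\eta)|>k}.
\end{equation*}
Each $D_k(\f)$ is open, because $f_n$ is locally constant. Hence the divergence set is comeager if and only if every $D_k(\f)$ is dense. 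Since $\rhoharm$ and the product topology agree, density can be tested on the basic sets $u\appinf$. So $\f\in N$ if and only if there are $u\in2^{<\N}$ and $k\in\N$ with $D_k(\f)\cap u\appinf=\es$. That is, $|f_n(\eta)-f_m(\eta)|\le k$ for all $\eta\in u\appinf$ and all $n>m\ge|u|$. It therefore suffices to show that, for fixed $u$ and $k$, the set $N_{u,k}$ of such $\f$ is porous in $\M_p$. (We would even aim for the uniform porosity of Theorem \ref{zajicek_theorem}.)

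Fix $\f\in\M_p$ and $r>0$. We look for $\h$ with $\pnp{\h-\f}<r$ such that $B(\h,cr)\cap N_{u,k}=\es$ for a constant $c>0$ independent of $\f$ and $r$. The key construction is a martingale $\g$ supported in $u\appinf$ with $\pnp{\g}\le r/2$ whose increments are huge on a set of positive measure. Concretely, take a long stopping-type martingale. Start at $0$ on $u$ and keep going down the branch $u\app0\app0\dotsb$, splitting the mass: along the branch the value doubles, $\pm$-symmetrized, reaching height $H$ at a cylinder of depth $|u|+L$ with $P$-measure $2^{-|u|-L}$. Then $\pnp{\g}^p\approx H^p2^{-|u|-L}$, which can be made $\le (r/2)^p$ while $H\gg k$ by taking $L$ large. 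This uses $p<\infty$ essentially, and it is exactly why the case $p=\infty$ is treated separately.

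Set $\h=\f+\g$, but only after truncating $\f$. Replace $\f$ by a martingale $\f'$ that is eventually constant (stopped at level $n_0$), using $\pnp{f_{n}-f_\infty}\to0$ for $p>1$. For $p=1$ we instead use density of finitely-generated martingales in the closure, or we argue directly. On the deep cylinder $w$, $\f'$ has no oscillation after $n_0$, while $\g$ jumps by at least $H$. Thus $|h_n-h_m|\ge H$ on $w\appinf$ for some $m<n$.

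The main obstacle is robustness: if $\pnp{\g'-\h}<cr$, the perturbation $\g'-\h$ could cancel the jump on the tiny set $w$. This is because the $L^p$ norm only controls it on average, and $P(w)$ is minuscule compared with $r$. To overcome this, we would make the jump spread over a set of measure comparable to $P(u)$. For example, realise many independent jumps $\pm H$ at depth $|u|+L$ on a set $E\subset u\appinf$ with $P(E)\ge P(u)/2$, using a martingale taking values $\pm H$ at a single time. By Doob's maximal inequality, a perturbation of norm $cr$ changes $|h_n-h_m|$ by more than $H/2$ only on a set of measure $\lesssim (cr/H)^p$, which is smaller than $P(E)$. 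This, however, costs $\pnp{\g}\approx HP(u)^{1/p}$, so $H$ would be bounded by $r$, which conflicts with $H>k$. Balancing this is the crux. We would resolve it by working with porosity at small scales only, i.e.\ using the definition with $R\to0$ and the sets $N_{u,k}$ restricted to $\f$ in bounded regions. Alternatively, we would refine the decomposition: index the sets also by $j$ with $N_{u,k,j}$ requiring the bound only at the scale where the required $H$ is comparable to the available norm. We would then check that the constant $c$ from the maximal inequality works uniformly.
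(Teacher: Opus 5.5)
\textbf{There is a genuine gap: porosity of $N_{u,k}$ is never proved, and the obstacle that stops you is not real.}

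Your reduction is correct, and it is cleaner than the paper's route. Since $\osc f_n(\eta)=\infty$ iff $(f_n(\eta))$ is unbounded, the divergence set is $\bigcap_k D_k(\f)$ with each $D_k(\f)$ open. So the bad set is $\bigcup_{u,k}N_{u,k}$, and it suffices to show each $N_{u,k}$ is porous. But the proposal stops exactly there. You describe a robustness obstacle, show that your own remedy (spreading the jump over measure $\approx P(u)$) forces $H\lesssim r$, and then only list unverified alternatives.

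The missing idea has two parts. The first is the pointwise bound $|\mathbf d(v)|\le P(v)^{-1/p}\pnp{\mathbf d}=2^{|v|/p}\pnp{\mathbf d}$ for every martingale $\mathbf d\in\M_p$; the paper uses it in Step~2 of Lemma~\ref{m_p_is_co_sigma_porous_lemma}. The second is to choose the height $H$ so that it uses the whole budget $r$, rather than fixing $H$ and letting the depth grow.

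Concretely:
\begin{itemize}
\item Take $m>|u|$ with $r2^{m/p}\ge 16k$, and put $H\coloneqq r2^{m/p}/4$.
\item Let $s\in 2^m\cap u\appinf$ maximize $\f$ on that level. Then $\f(s)\ge\f(u)$, because $\f(u)$ is the average of $\f$ over that level.
\item Let $t$ be the sibling of $s$, and let $\e$ be $1$ on $s\appinf$, $-1$ on $t\appinf$ and $0$ elsewhere.
\item Set $\h\coloneqq\f+H\e$. Then $\pnp{\h-\f}=2^{1/p}r/4\le r/2$ and $\h(s)-\h(u)\ge H$.
\item Any $\h'$ with $\pnp{\h'-\h}<r/16$ satisfies $|\h'(s)-\h(s)|\le 2^{m/p}r/16=H/4$ and likewise $|\h'(u)-\h(u)|\le H/4$.
\item Hence $\h'(s)-\h'(u)\ge H/2\ge 2k>k$, so $B(\h,r/16)\subset B(\f,r)\setminus N_{u,k}$.
\end{itemize}
The tiny measure of $s\appinf$ is exactly offset by the size of the jump, so a perturbation of norm $r/16$ cannot cancel it.

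\textbf{The truncation step should be dropped.} For $p=1$ it is false that eventually constant martingales are dense in $\M_1$. Their closure consists of uniformly integrable martingales, and the doubling martingale stays at distance at least $1$ from all of them. The vague appeal to ``density of finitely generated martingales'' does not fix this. Truncation is also unnecessary: putting the jump at a maximizer of $\f$, as above and as in Step~1 of the paper, makes the oscillation of $\f$ irrelevant.

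With these two repairs your route gives a genuinely simpler proof than the paper's. The paper proves co-$\sigma$-porosity of the sets where $\{\osc\ge k\}$ is comeager. It perturbs all $2^n$ cylinders of level $n$ at once, bounds the measure of failed cylinders via Lemma~\ref{martingales_are_not_sigma_porous_infty_lp_lemma}, and then takes a $\limsup$ of cylinder unions. Your reduction via openness of $D_k(\f)$ needs only one robust jump per pair $(u,k)$. As written, however, the proposal does not yet contain a proof.
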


\subsection{Metric derived from $2^{-n}$}

First, we show that the results of \cite{kal-spu} cannot be strengthened in terms of the size of the set where individual martingales diverge.

\begin{theorem}
\label{no_divergent_martingale}
Let $p\in[1,\infty]$ and let $\f\in\M_p$. Then 
\begin{equation*}
    Q\coloneqq\bs{\nu\in2^\N;\osc f_n(\nu)>0}
\end{equation*}
is not co-$\sigma$-porous in $(2^\N,\rhoexp)$.
\end{theorem}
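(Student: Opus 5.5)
Suppose, towards a contradiction, that $Q$ is co-$\sigma$-porous in $(2^\N,\rhoexp)$, that is, $2^\N\setminus Q$ is $\sigma$-porous. Since the convergence set $C\coloneqq\s{\nu\in2^\N;\osc f_n(\nu)=0}$ equals $2^\N\setminus Q$, it is $\sigma$-porous in $(2^\N,\rhoexp)$. By Lemma~\ref{sigma_porous_zero_measure_cantor_lemma} we then get $P(C)=0$.

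On the other hand, every $\f\in\M_p$ with $p\in[1,\infty]$ is $L^1$-bounded, because $\pn{\cdot}{1}\le\pnp{\cdot}$. Doob's Martingale Convergence Theorem therefore says that $f_n(\nu)$ converges to a finite limit for $P$-almost every $\nu$. For such $\nu$ we have $\osc f_n(\nu)=0$, so $P(C)=1$. This contradicts $P(C)=0$.

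The only points to check are measurability and that Lemma~\ref{sigma_porous_zero_measure_cantor_lemma} is applied with the right metric. For measurability, $C$ is Borel, being defined by countably many conditions on the $\Sigma_n$-measurable functions $f_n$; in any case the argument only needs $P$ to be complete, which Lemma~\ref{porous_zero_measure_cantor_lemma} already uses. For the metric, the lemma is stated for the Cantor space, and its proof goes through Theorem~\ref{zajicek_theorem} and Lemma~\ref{porous_zero_measure_cantor_lemma}, which are formulated with $\rhoexp$. That is exactly our setting, so there is no real obstacle: the statement is an immediate corollary of these two results.
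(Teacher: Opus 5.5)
Your proof is correct and is essentially the paper's argument: $L^1$-boundedness and Doob's theorem give that the convergence set $2^\N\setminus Q$ has full measure, while Lemma~\ref{sigma_porous_zero_measure_cantor_lemma} shows that $\sigma$-porous subsets of $(2^\N,\rhoexp)$ are null (a small correction: only Lemma~\ref{porous_zero_measure_cantor_lemma} is specific to $\rhoexp$, since Theorem~\ref{zajicek_theorem} holds in any metric space). The paper states it directly, as ``$P(Q)=0$, so $Q$ cannot be co-$\sigma$-porous,'' rather than by contradiction, but the reasoning is the same.
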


\begin{proof}
From the $\pn{\cdot}{1}\leq\pnp{\cdot}$ inequality, we get $\f\in\M_1$. So, by the Doob theorem, $P(Q)=0$. We can therefore use Lemma \ref{sigma_porous_zero_measure_cantor_lemma} to conclude that it cannot be co-$\sigma$-porous.
\end{proof}

Now, let us consider the $p\in[1,\infty)$ case. Since any co-porous set is co-nowhere-dense, any co-$\sigma$-porous set is comeager and both metrics $\rhoexp$, $\rhoharm$ generate the same topology, Theorems \ref{m_infty_is_co_porous} and \ref{m_p_is_co_sigma_porous} yield the following two corollaries.

\begin{corollary}
\label{m_infty_is_co_porous_corollary}
The set
\begin{equation*}
R\coloneqq\bs{\f\in \M_\infty; \s{\nu\in2^\N;\osc f_n(\nu)>0} \text{ is comeager in $(2^\N,\rhoexp)$}}
\end{equation*}
is co-1-porous in $\M_\infty$.
\end{corollary}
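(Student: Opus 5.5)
This corollary follows directly from Theorem~\ref{m_infty_is_co_porous}: the set $R$ of the corollary contains the set $R$ of the theorem. A superset of a co-1-porous set is again co-1-porous, since a subset of a 1-porous set is 1-porous. Porosity is monotone under shrinking: if $A'\subset A$, then $\gamma(A',R,x)\geq\gamma(A,R,x)$, so $p(A',x)\geq p(A,x)$.

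Concretely, let $\f$ belong to the set from Theorem~\ref{m_infty_is_co_porous}, and write
\[
D_{\f}\coloneqq\s{\nu\in2^\N;\osc f_n(\nu)>0}.
\]
Then $2^\N\setminus D_{\f}$ is $\sigma$-porous in $(2^\N,\rhoharm)$, i.e.\ a countable union of sets each porous at every point of $2^\N$.

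The next step is to show that each porous set $A$ is nowhere dense. Let $U$ be nonempty and open, pick $x\in U$, and choose $R>0$ small with $B(x,R)\subset U$ and $\gamma(A,R,x)>0$. The definition of $\gamma$ then gives a ball $B(y,r)\subset U\setminus A$. Since $U$ was arbitrary, $A$ is nowhere dense. Consequently $2^\N\setminus D_{\f}$ is meager in $(2^\N,\rhoharm)$, and $D_{\f}$ is comeager there.

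Meagerness is a purely topological notion. Both $\rhoharm$ and $\rhoexp$ induce the product topology on $2^\N$: their open balls are exactly the cylinders $2^\N\cap u\appinf$. Hence $D_{\f}$ is also comeager in $(2^\N,\rhoexp)$, and $\f$ lies in the corollary's $R$.

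It follows that the complement of the corollary's $R$ is contained in the complement of the theorem's $R$, which is 1-porous in $\M_\infty$. By monotonicity of porosity it is therefore itself 1-porous, which proves the corollary. No real obstacle is expected here; the only point requiring care is the monotonicity of porosity under passing to subsets, which is immediate from the definition of $\gamma$.
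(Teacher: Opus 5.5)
Your proposal is correct and matches the paper's argument: the paper obtains this corollary from Theorem~\ref{m_infty_is_co_porous} by noting that co-$\sigma$-porous sets are comeager and that $\rhoharm$ and $\rhoexp$ generate the same topology, so the theorem's set is contained in the corollary's set and the complement stays $1$-porous. You additionally write out two steps the paper leaves implicit, namely that porous sets are nowhere dense and that porosity is preserved under passing to subsets.
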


\begin{corollary}
\label{m_p_is_co_sigma_porous_corollary}
Let $p\in[1, \infty)$ be arbitrary. Then
\begin{equation*}
\Bs{\f\in\M_p;\bs{\eta\in2^\N;\osc f_n(\eta)=\infty}\text{ is comeager in $(2^\N,\rhoexp)$}}
\end{equation*}
is co-$\sigma$-porous in $\M_p$.
\end{corollary}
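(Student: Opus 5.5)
This corollary follows directly from Theorem~\ref{m_p_is_co_sigma_porous}. The only thing that changes is the metric on the Cantor space, and the property ``comeager'' is purely topological.

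First I will check that $\rhoharm$ and $\rhoexp$ induce the same topology on $2^\N$. For any decreasing $\varphi\colon\N\ra\R^+$ with $\varphi(n)\ra0$, the open $\psi_\varphi$-balls are exactly the cylinder sets $2^\N\cap u\appinf$, $u\in2^{<\N}$. Indeed, $\psi_\varphi(\eta,\nu)<\varphi(n)$ holds if and only if $\eta$ and $\nu$ agree on the first $n$ coordinates. Hence both metrics generate the product topology. In particular, the nowhere dense, meager and comeager subsets of $2^\N$ are the same for $\rhoharm$ and $\rhoexp$.

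Next, fix $p\in[1,\infty)$ and define
\begin{equation*}
A_{\mathrm{harm}}\coloneqq\bs{\f\in\M_p;\s{\eta;\osc f_n(\eta)=\infty}\text{ is comeager in }(2^\N,\rhoharm)}.
\end{equation*}
Let $A_{\mathrm{exp}}$ be the set defined in the statement of the corollary. The set $\s{\eta;\osc f_n(\eta)=\infty}$ depends only on $\f$ and not on any metric. By the previous step, it is comeager in $(2^\N,\rhoharm)$ if and only if it is comeager in $(2^\N,\rhoexp)$. Hence $A_{\mathrm{exp}}=A_{\mathrm{harm}}$.

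Finally, Theorem~\ref{m_p_is_co_sigma_porous} states that $A_{\mathrm{harm}}$ is co-$\sigma$-porous in $\M_p$. Porosity here is measured in the norm of $\M_p$, which does not change. Therefore $A_{\mathrm{exp}}$ is co-$\sigma$-porous in $\M_p$.

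No step is a real obstacle. The only point needing care is that the two metrics give the same open sets. This is why Lemma~\ref{constant_sigma_porosity} is neither needed nor applicable: the metrics are not bi-Lipschitz equivalent, but comeagerness only requires that they be topologically equivalent.
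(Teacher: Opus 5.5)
Your proposal is correct and is essentially the paper's own argument: the paper derives this corollary from Theorem~\ref{m_p_is_co_sigma_porous} by noting that $\rhoharm$ and $\rhoexp$ generate the same topology. Hence the two sets of martingales coincide, and co-$\sigma$-porosity, which is measured in the unchanged norm of $\M_p$, transfers directly. One small remark: your characterization $\psi_\varphi(\eta,\nu)<\varphi(n)\iff\eta_{\leq n}=\nu_{\leq n}$ uses that $\varphi$ is strictly decreasing, which holds for both $n\mapsto 1/n$ and $n\mapsto 2^{-n}$.
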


We also show a negative result in the product space.

\begin{restatable}{theorem}{MiPrNotSigmaPorous}
\label{mi_pr_not_sigma_porous}
\begin{equation*}
A\coloneqq\bs{(\f,\nu)\in\PriSp;\osc f_n(\nu)=0}
\end{equation*}
is not $\sigma$-porous in $\Mi\times(2^\N,\rhoexp)$.
\end{restatable}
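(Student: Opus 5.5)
We argue by contradiction. Suppose $A=\bigcup_{k\in\N}A_k$ is $\sigma$-porous in $\Mi\times(2^\N,\rhoexp)$. By Corollary~\ref{zajicek_corollary} we may assume that there are $s_k\to0$ and $\beta_k\in(0,1)$ such that for every point $x$ of the product and every $r\in(0,s_k)$ there is $y$ with $\overline{B(y,\beta_k r)}\subset B(x,r)\setminus A_k$. We will build nested closed balls $\overline{B((\f^k,\eta^k),t_k)}$ with $t_k\to0$ and $\overline{B((\f^k,\eta^k),t_k)}\cap A_k=\es$. The completeness of $\Mi$ and the compactness of $2^\N$ then give a limit point $(\f,\nu)$ that lies in no $A_k$. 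The whole difficulty is to run this construction so that, in addition, $\osc f_n(\nu)=0$. Then $(\f,\nu)\in A\setminus\bigcup_k A_k$, which is the desired contradiction.

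The convergence at $\nu$ is controlled through Lemma~\ref{near_martingales_lemma}. Since $\|\f-\f^k\|_\infty\le t_k$, it suffices to ensure $\osc f^k_n(\nu)\le\delta_k$ for some sequence $\delta_k\to0$. To get this we use Doob's theorem together with Egorov's theorem. Each $\f^k\in\Mi\subset\M_1$ converges $P$-a.e. Hence, inside any cylinder $C$, there are a closed set $E_k\subset C$ with $P(E_k)\ge(1-\gamma)P(C)$ and a level $N_k$ such that $|f^k_n(x)-f^k_m(x)|\le\delta_k$ for all $n,m\ge N_k$ and $x\in E_k$. Here $\gamma>0$ can be prescribed in advance. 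Consequently every point of $E_k$ has oscillation at most $\delta_k$.

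The construction is inductive. Alongside the balls we maintain the closed sets $D_k=E_1\cap\dots\cap E_k$. We require that $D_k$ has positive measure inside the current cylinder $C_k\coloneqq B_{\rhoexp}(\eta^k,t_k)$, and that $\eta^k$ is a Lebesgue density point of $D_k$. At step $k+1$, choose $r<\min\{s_{k+1},t_k\}$ so small that $P(D_k\cap B_{\rhoexp}(\eta^k,r))>(1-\beta_{k+1}/8)\,P(B_{\rhoexp}(\eta^k,r))$. The porosity of $A_{k+1}$ then yields a hole $\overline{B((\g,\xi),\beta_{k+1}r)}\subset B((\f^k,\eta^k),r)\setminus A_{k+1}$. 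Its $2^\N$-projection is a cylinder of radius $\beta_{k+1}r$. In $\rhoexp$, such a cylinder carries at least a fraction of order $\beta_{k+1}/4$ of the measure of $B_{\rhoexp}(\eta^k,r)$. Hence it meets $D_k$ in a set of positive measure. We set $\f^{k+1}\coloneqq\g$. We apply Egorov to $\g$ inside this cylinder with $\gamma$ so small that $D_{k+1}\coloneqq D_k\cap E_{k+1}$ still has positive measure there. Then we take $\eta^{k+1}$ to be a density point of $D_{k+1}$ in that cylinder. Finally we choose $t_{k+1}$ small enough that $(\g,\eta^{k+1})$ with radius $t_{k+1}$ lies in the hole. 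This uses the ultrametric property (Lemma~\ref{ultrametric_open_balls_of_same_size_lemma}), which lets us recentre cylinders, together with $\|\g-\g\|=0$. We also make $t_{k+1}$ small compared with $\delta_{k+1}$, so that $\sum_{j>k}t_j\to0$ suitably.

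In the limit, $\nu$ is the limit of the points $\eta^j\in D_j\subset E_k$ for $j\ge k$. Since each $E_k$ is closed, $\nu\in E_k$, and therefore $\osc f^k_n(\nu)\le\delta_k$. Combining this with $\|\f-\f^k\|_\infty\le t_k$ (Lemma~\ref{near_martingales_lemma}) gives $\osc f_n(\nu)\le\delta_k+2t_k\to0$. So $(\f,\nu)\in A$, yet $(\f,\nu)\notin A_k$ for every $k$, a contradiction.

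The step I expect to need the most care is the measure bookkeeping. First, the hole must really meet the good set $D_k$ in positive measure. This is where the $\rhoexp$ metric is essential: balls of radius $\beta r$ have measure comparable to $\beta$ times that of balls of radius $r$. This property fails for $\rhoharm$, consistent with Theorem~\ref{m_infty_is_co_porous}. Second, the Egorov sets must be intersected without losing positivity of measure.
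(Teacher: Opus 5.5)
Your proof is correct, but it takes a genuinely different route from the paper's.

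The paper does not use Doob's theorem here. It proves a combinatorial steering lemma (Lemma~\ref{mi_pr_not_sigma_porous_lemma}): if $\f(v)$ is within $2^{-k-n}$ of the supremum $s$ of $\f$ over the subtree $M$ of nodes reachable from $u$ without dropping below $\f(u)-2^{-n}$, then every descendant of $v$ of depth at most $k$ stays within $2^{-n}$ of $s$. With $s_n=\beta_n=2^{-n}$ in Corollary~\ref{zajicek_corollary}, the hole found at radius $2^{-l}$ has radius $2^{-n-l}$, so it fixes only $n$ further digits of $\nu$. The lemma then keeps $g^{n+1}_j(\nu^{n+1})$, $j\in[m_n:m_{n+1}]$, within $O(2^{-n})$ of $[\min\{s_{n-1},s_n\},s_n]$. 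The suprema $s_n$ are almost increasing and bounded, so they converge to some $S$, and $g_j(\nu)\to S$.

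You replace this deterministic path steering with measure theory:
\begin{itemize}
\item a.e.\ convergence of each $\f^k$ (Doob);
\item closed Egorov sets of small oscillation;
\item density points of $D_k=E_1\cap\dots\cap E_k$ with respect to cylinders.
\end{itemize}
In your version, $\rhoexp$ enters only through $P(B_{\rhoexp}(\xi,\beta r))\ge\beta r>\tfrac{\beta}{2}P(B_{\rhoexp}(\eta,r))$. This forces every hole near a density point of $D_k$ to meet $D_k$ in positive measure, which is the same mechanism as in Lemma~\ref{porous_zero_measure_cantor_lemma}.

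What each route buys:
\begin{itemize}
\item Yours is more conceptual and more portable. It shows the theorem follows from ``martingales converge a.e.\ and porosity holes carry a fixed fraction of the measure''. So it transfers to any metric with this ball-measure comparability, and it makes the contrast with $\rhoharm$ transparent.
\item The paper's is elementary and self-contained, using only \eqref{eq:prumer}, and it identifies the limit value $S$ explicitly.
\end{itemize}

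Minor points you can tighten:
\begin{itemize}
\item Each $f^k_n$ is locally constant, so $C\cap\{x;\sup_{n,m\ge N_k}|f^k_n(x)-f^k_m(x)|\le\delta_k\}$ is closed outright; no regularity argument is needed.
\item The density theorem for cylinders is L\'evy's upward theorem applied to the indicator of $D_k$.
\item The bound $\osc f_n(\nu)\le\osc f^k_n(\nu)+2\pni{\f-\f^k}$ is immediate; it is just Lemma~\ref{near_martingales_lemma} read in the other direction.
\item You do not need $t_{k+1}$ to be small relative to $\delta_{k+1}$; $t_k\to0$ suffices.
\end{itemize}
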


\subsection{Other metrics}

We could naturally ask what the key difference between $\frac{1}{n}$ and $2^{-n}$ is that determines which sets are porous and which are not. And what about other metrics on $2^\N$? Could the results be even more surprising? 

It turns out that a very important factor is the limit behavior of the fraction between two consecutive values of the distance weighting function $\varphi$. The closer this fraction is to $1$, the more sets are porous. So, the conclusions of Theorems \ref{m_infty_is_co_porous} and \ref{m_p_is_co_sigma_porous} would hold for the metric derived from $1/\sqrt{n}$, but no martingale diverges on a co-$\sigma$-porous set if the metric is derived from $2^{-n^2}$.

The rest of this section contains formal statements of such results.

\begin{restatable}{lemma}{PowerDoesNotChangePorosity}
\label{power_does_not_change_porosity}
Let $p\in(0,\infty)$ and $(X,\xi)$ be a metric space such that $(X,\xi^p)$ is also a metric space. Let also $x\in X$ and $a>0$. Then any $A\subset X$ that is $2a$-porous in $(X,\xi^p)$ at $x$ is $2\sqrt[p]{a}$-porous in $(X,\xi)$ at $x$. Here, $\xi^p(x,y)$ simply means the value of the metric $\xi(x,y)$ raised to the power of $p$.
\end{restatable}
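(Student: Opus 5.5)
The plan is a direct translation through Lemma~\ref{porosity_condition_lemma}, using that balls in the two metrics coincide after reparametrising the radius. For every $z\in X$ and $s>0$ we have $B_{\xi^p}(z,s)=\{w;\xi(z,w)^p<s\}=B_\xi(z,s^{1/p})$, because $t\mapsto t^p$ is strictly increasing on $[0,\infty)$. So any inclusion of balls in $(X,\xi^p)$ is the same inclusion of balls in $(X,\xi)$ with every radius replaced by its $p$-th root.

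First step: apply Lemma~\ref{porosity_condition_lemma} in $(X,\xi^p)$ with $\alpha=a$. Since $A$ is $2a$-porous at $x$ in $(X,\xi^p)$, for every $\varepsilon\in(0,a)$ there is $r_0>0$ such that for every $r\in(0,r_0)$ some $y\in X$ satisfies
\[
B_{\xi^p}\bigl(y,(a-\varepsilon)r\bigr)\subset B_{\xi^p}(x,r)\setminus A .
\]

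Second step: translate into $\xi$-balls. Setting $\rho=r^{1/p}$, the inclusion reads
\[
B_\xi\bigl(y,(a-\varepsilon)^{1/p}\rho\bigr)\subset B_\xi(x,\rho)\setminus A
\qquad\text{for all }\rho\in(0,r_0^{1/p}).
\]
Because the map $r\mapsto r^{1/p}$ is a bijection of $(0,r_0)$ onto $(0,r_0^{1/p})$, every small $\xi$-radius $\rho$ is covered.

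Third step: match the form required by Lemma~\ref{porosity_condition_lemma} in $(X,\xi)$ with $\alpha=a^{1/p}$. Given $\varepsilon'\in(0,a^{1/p})$, continuity of $t\mapsto t^{1/p}$ at $a$ lets us choose $\varepsilon\in(0,a)$ with $(a-\varepsilon)^{1/p}\ge a^{1/p}-\varepsilon'$. The $\xi$-ball of the smaller radius $(a^{1/p}-\varepsilon')\rho$ is contained in the one of radius $(a-\varepsilon)^{1/p}\rho$, so the condition holds with $\alpha=a^{1/p}$. The lemma then gives that $A$ is $2a^{1/p}$-porous at $x$ in $(X,\xi)$.

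There is no real obstacle here. The only delicate point is the quantifier bookkeeping: we must check that the scaling $r\mapsto r^{1/p}$ preserves the condition ``for all sufficiently small radii'', and that the choice of $\varepsilon$ in terms of $\varepsilon'$ is legitimate. The hypothesis that $\xi^p$ is a metric is used only so that porosity in $(X,\xi^p)$, and hence Lemma~\ref{porosity_condition_lemma} there, makes sense.
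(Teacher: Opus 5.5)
Your proof is correct and complete. The identity $B_{\xi^p}(z,s)=B_\xi(z,s^{1/p})$ turns the characterization of Lemma~\ref{porosity_condition_lemma} in $(X,\xi^p)$ with $\alpha=a$ into the one in $(X,\xi)$ with $\alpha=a^{1/p}$. The reparametrisation $\rho=r^{1/p}$ preserves ``all sufficiently small radii''. Your choice of $\varepsilon$ from $\varepsilon'$ is legitimate; for instance $\varepsilon=a-(a^{1/p}-\varepsilon')^p\in(0,a)$ gives $(a-\varepsilon)^{1/p}=a^{1/p}-\varepsilon'$ exactly.

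The paper states this lemma but never proves it, so there is no argument of the authors to compare yours with. Yours is the natural one.

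You can skip the $\varepsilon$-bookkeeping by working directly with $\gamma$. The same ball identity gives $\gamma_\xi(A,\rho,x)=\gamma_{\xi^p}(A,\rho^p,x)^{1/p}$, so
\[
\frac{\gamma_\xi(A,\rho,x)}{\rho}=\Bigl(\frac{\gamma_{\xi^p}(A,\rho^p,x)}{\rho^p}\Bigr)^{1/p}.
\]
Since $t\mapsto t^{1/p}$ is continuous and increasing, it commutes with $\liminf$, which yields the exact equality $p_\xi(A,x)=2\bigl(p_{\xi^p}(A,x)/2\bigr)^{1/p}$. This gives both directions at once. That is the form the paper actually needs in the proof of Corollary~\ref{LimitPorosityCriterionRationalCorollary}, where it claims that the porous sets for $\psi_1$ and $\psi_1^p$ coincide. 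With your one-directional version, you would get the converse by applying the lemma again to $(X,\xi^p)$ with exponent $1/p$.
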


\begin{restatable}{theorem}{LimitPorosityCriterionTheorem}
\label{limit_porosity_criterion_theorem}
Let $\varphi_1,\varphi_2\colon\N\ra\R^+$ be decreasing functions such that $\varphi_1(n)\ra0$,  $\varphi_2(n)\ra0$ and
\begin{equation}
\label{limit_porosity_criterion_theorem:varphi12relation}
\exists n_0\in\N\forall n\geq n_0:\frac{\varphi_1(n+1)}{\varphi_1(n)}\leq\frac{\varphi_2(n+1)}{\varphi_2(n)}.
\end{equation}
Let us further denote $\psi_i\coloneqq\psi_{\varphi_i}$, $i\in\s{1,2}$. Then any set that is porous in $(2^\N,\psi_1)$ is also porous in $(2^\N,\psi_2)$.
\end{restatable}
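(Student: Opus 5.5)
The plan is to reduce porosity in $(2^\N,\psi_\varphi)$ to a purely combinatorial statement about cylinders, in which $\varphi$ enters only through quotients $\varphi(m)/\varphi(n)$. The hypothesis \eqref{limit_porosity_criterion_theorem:varphi12relation} then compares these quotients for $\varphi_1$ and $\varphi_2$ by a telescoping product.

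\emph{Step 1 (balls are cylinders).} For a decreasing $\varphi$ with $\varphi(n)\ra0$ and $r>0$, let $n(r)\coloneqq\max\s{k\in\N_0;\ \varphi(k)\geq r}$, with the convention $\varphi(0)=+\infty$. Then $\psi_\varphi(x,z)<r$ holds iff $x$ and $z$ agree on all indices $k$ with $\varphi(k)\geq r$. Since $\varphi$ is decreasing, these are exactly the indices $k\leq n(r)$. Hence
\[
B_{\psi_\varphi}(x,r)=2^\N\cap x_{\leq n(r)}\appinf .
\]
In particular, every ball is a cylinder, and a ball $B(y,s)$ contains the cylinder of level $m$ through $y$ whenever $s>\varphi(m+1)$. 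If $\varphi$ has plateaus, $n(r)$ only takes values at the jump indices. I will keep track of this, but it does not affect the argument below.

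\emph{Step 2 (characterization of porosity).} I claim that $A$ is porous at $x$ in $(2^\N,\psi_\varphi)$ if and only if there exist $\delta>0$ and $N$ with the following property: for every $n\geq N$ there are $m\geq n$ and $u\in 2^m$ with $u_{\leq n}=x_{\leq n}$, $u\appinf\cap A=\es$ and $\varphi(m)\geq\delta\varphi(n)$.

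For the forward direction, take $R=\varphi(n)$, or $R$ slightly above $\varphi(n+1)$, so that $B(x,R)$ is the level-$n$ cylinder. A hole $B(y,s)\subset B(x,R)\setminus A$ with $s\geq cR$ is, by Step 1, a cylinder of some level $m\geq n$. This gives $\varphi(m)\geq s\geq c\varphi(n)$ up to harmless constants, since $\varphi(m)\geq s$ whenever $n(s)=m$.

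For the converse, let $R\in(\varphi(n+1),\varphi(n)]$. Then $B(x,R)$ is the level-$n$ cylinder, and the $A$-free cylinder $u\appinf$ equals the ball $B(u0\ldots,\varphi(m))$. Choosing the hole radius to be $\varphi(m)$ works when $\varphi(m)<\varphi(m-1)$; in general I take the largest level with that value. This gives $\gamma(A,R,x)/R\geq\varphi(m)/\varphi(n)\geq\delta$. The worst $R$ in each window is $R=\varphi(n)$, which is why the condition compares with $\varphi(n)$ rather than $\varphi(n+1)$.

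\emph{Step 3 (comparison).} For $n_0\leq n\leq m$, telescoping gives
\[
\frac{\varphi_2(m)}{\varphi_2(n)}=\prod_{k=n}^{m-1}\frac{\varphi_2(k+1)}{\varphi_2(k)}\geq\prod_{k=n}^{m-1}\frac{\varphi_1(k+1)}{\varphi_1(k)}=\frac{\varphi_1(m)}{\varphi_1(n)}.
\]
Now let $A$ be porous in $(2^\N,\psi_1)$ and fix $x$. Step 2 yields $\delta,N$ and, for each $n\geq\max\s{N,n_0}$, a cylinder $u\appinf$ that misses $A$, lies inside $x_{\leq n}\appinf$, and satisfies $\varphi_1(m)\geq\delta\varphi_1(n)$. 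By the displayed inequality, the very same cylinders satisfy $\varphi_2(m)\geq\delta\varphi_2(n)$. So Step 2, applied to $\varphi_2$, shows that $A$ is porous at $x$ in $(2^\N,\psi_2)$, with the same $\delta$ up to the constants from Step 2.

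The main obstacle is Step 2. The difficulty is getting the off-by-one bookkeeping between radii and levels right, and in particular handling non-strictly decreasing $\varphi$, where $n(r)$ skips indices. I expect to handle this by working only with the jump indices of $\varphi$ when choosing $R$ and $s$. The qualitative equivalence survives because all losses there are bounded constants, independent of $n$.
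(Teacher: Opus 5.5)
Your proposal is correct, but it takes a genuinely different route from the paper. The paper first uses Lemma~\ref{limit_porosity_criterion_lemma}: a nonempty set can only be porous if the consecutive ratios of $\varphi_1$ stay bounded away from $0$. Together with \eqref{limit_porosity_criterion_theorem:varphi12relation}, this gives a $q\in(0,1)$ with $\varphi_i(n+1)\ge q\varphi_i(n)$ for $i=1,2$. The paper then runs a direct radius-chasing argument: it rounds radii to values of $\varphi_1$ and $\varphi_2$ at suitable indices $n_2,n_3$ and telescopes between $n_3$ and $n_2$. The ratio bound is what controls this rounding, and it costs a factor $q^2$: from $2c$-porosity in $\psi_1$ one only gets $2q^2c$-porosity in $\psi_2$.

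You instead make the passage to cylinders exact. Every ball is a cylinder, and the worst radius in the window $(\varphi(n+1),\varphi(n)]$ is $R=\varphi(n)$. Hence porosity at $x$ is equivalent to your level-wise condition, in which $\varphi$ enters only through $\varphi(m)/\varphi(n)$ with $m\ge n$, and the telescoping inequality transfers the witnessing cylinders verbatim. Compared with the paper, this buys three things:
\begin{itemize}
\item you need neither Lemma~\ref{limit_porosity_criterion_lemma} nor any lower bound on consecutive ratios;
\item the argument is naturally pointwise, whereas the paper's proof starts from a uniform constant $c$ even though ``porous'' allows the constant to depend on the point;
\item you get the sharper conclusion $p_{\psi_2}(A,x)\ge p_{\psi_1}(A,x)$ for every $x$.
\end{itemize}
The paper's lemma is still of independent interest, e.g.\ for $\psi_{n\mapsto 2^{-n^2}}$.

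The plateau bookkeeping you flag is harmless and needs no special treatment. In the converse direction, for $y\in u\appinf\cap 2^\N$ the ball $B_{\psi_\varphi}(y,\varphi(m))$ is the cylinder through $y$ of level $n(\varphi(m))\ge m$, so it is always contained in $u\appinf$. Your stated condition for equality should read $\varphi(m+1)<\varphi(m)$ rather than $\varphi(m)<\varphi(m-1)$, but equality is never needed. In the forward direction, $B(x,\varphi(n))$ is a cylinder of level $n(\varphi(n))\ge n$, which still gives $m\ge n$ and $u_{\le n}=x_{\le n}$.
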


\begin{corollary}
\label{LimitPorosityCriterionRationalCorollary}
Let $R_1,R_2\colon\N\ra\R^+$ be rational functions that are decreasing on $\N$ and satisfy that $R_1(n)\ra0$, $R_2(n)\ra0$. Then if we define $\psi_i\coloneqq\psi_{R_i}$, $i\in\s{1,2}$,
we obtain two metrics on $2^\N$ that generate the same porous sets.
\end{corollary}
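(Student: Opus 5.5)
The plan is to avoid the ratio criterion of Theorem~\ref{limit_porosity_criterion_theorem} and instead combine Lemma~\ref{power_does_not_change_porosity} with Lemma~\ref{constant_sigma_porosity}. The reason is that two rational functions of the same degree are comparable up to multiplicative constants, while different degrees can be equalized by taking a power of the metric.

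\emph{Step 1: asymptotics.} Write $R_i=P_i/Q_i$ with polynomials $P_i,Q_i$. Since $R_i$ is positive on $\N$ and $R_i(n)\to0$, we have $\deg Q_i>\deg P_i$. Hence $R_i(n)\sim c_i n^{-k_i}$ with $c_i>0$ and $k_i=\deg Q_i-\deg P_i\in\N$. Set $a\coloneqq k_2/k_1>0$. The function $R_1^a$ is again positive, decreasing and tends to $0$, and $R_1(n)^a\sim c_1^a n^{-k_2}$. So $R_1(n)^a/R_2(n)\to c_1^a/c_2\in(0,\infty)$. A positive sequence with a positive finite limit is bounded above and below by positive constants, so there are $c,d>0$ with $c\,R_2(n)\le R_1(n)^a\le d\,R_2(n)$ for all $n\in\N$.

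\emph{Step 2: comparing metrics.} Because $t\mapsto t^a$ is increasing, it commutes with the maximum in the definition of $\psi_\varphi$, so $\psi_{R_1^a}=(\psi_{R_1})^a$. Taking the maximum over the same index set $\s{n;\eta_n\neq\nu_n}$ in the inequality of Step~1 gives $c\,\psi_{R_2}\le\psi_{R_1^a}\le d\,\psi_{R_2}$ on $2^\N$. Lemma~\ref{constant_sigma_porosity} then says that $(2^\N,\psi_{R_1^a})$ and $(2^\N,\psi_{R_2})$ have the same porous sets.

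\emph{Step 3: removing the power.} It remains to show that $(2^\N,\psi_{R_1})$ and $(2^\N,(\psi_{R_1})^a)$ have the same porous sets.
\begin{itemize}
\item Any positive power of an ultrametric is again an ultrametric, since $\max$ commutes with increasing maps. So both $\xi\coloneqq\psi_{R_1}$ and $\xi^a$ are metrics, and Lemma~\ref{power_does_not_change_porosity} applies with exponent $a$.
\item Applying the lemma once more to the metric $\xi'\coloneqq\xi^a$ with exponent $1/a$ gives the converse direction.
\item Porosity at a point just means $2b$-porosity for some $b>0$, and the lemma turns this into $2\sqrt[a]{b}$-porosity or $2b^{a}$-porosity, which is still positive. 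Applied pointwise, this shows that porous sets coincide for $\xi$ and $\xi^a$.
\end{itemize}
Chaining Steps~2 and~3 yields the corollary.

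The only delicate points are the bookkeeping in Step~1, namely the sign of the leading coefficients and the fact that the bounds hold for all $n$ and not merely eventually, and the observation that powers of the ultrametrics remain metrics, which is what allows Lemma~\ref{power_does_not_change_porosity} to be used in both directions. I expect no further obstacle.
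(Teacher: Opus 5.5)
Your proof is correct, but it follows a different route from the paper's. The paper chooses exponents $p,q>0$ such that the pairs $(R_1^p,R_2)$ and $(R_2^q,R_1)$ satisfy the ratio condition \eqref{limit_porosity_criterion_theorem:varphi12relation}. It then applies Theorem~\ref{limit_porosity_criterion_theorem} once in each direction, and uses Lemma~\ref{power_does_not_change_porosity} to pass between $\psi_1$ and $\psi_1^p$ and between $\psi_2$ and $\psi_2^q$. The paper does not spell out why such $p,q$ exist. Checking it means comparing the expansions $R_i(n+1)/R_i(n)=1-k_i/n+O(n^{-2})$, and it forces the strict choices $p>k_2/k_1$ and $q>k_1/k_2$.

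You instead take the exact exponent $a=k_2/k_1$, for which $R_1^a$ and $R_2$ are comparable up to multiplicative constants on all of $\N$. The elementary bi-Lipschitz Lemma~\ref{constant_sigma_porosity} then gives both inclusions at once. You only need the first-order asymptotics $R_i(n)\sim c_in^{-k_i}$, and you never use the ratio criterion or its more delicate proof. For rational weights your argument is therefore shorter and more self-contained.

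What the paper's route buys is generality, and it fits the subsection's point that the ratio $\varphi(n+1)/\varphi(n)$ governs porosity. The ratio criterion also identifies the porous sets for weights that are not comparable up to a power and a constant. For example, with $1/n$ and $1/(n\log(n+1))$, no exponent $a$ makes $(1/n)^a$ comparable to $1/(n\log(n+1))$ up to constants, yet any $p,q>1$ satisfy the ratio condition in both directions.
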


\begin{proof}
Let us find $p,q\in(0,\infty)$ such that both pairs $(R_1^p,R_2)$ and $(R_2^q,R_1)$ satisfy the condition in Theorem \ref{limit_porosity_criterion_theorem}. From Lemma \ref{power_does_not_change_porosity}, we see that the porous sets in $\psi_1$ and $\psi_1^p$, as well as in $\psi_2$ and $\psi_2^q$, coincide. Theorem \ref{limit_porosity_criterion_theorem} therefore gives us that porous sets coincide in $\psi_1$ and $\psi_2$ as well.
\end{proof}

\begin{restatable}{corollary}{CantorSetDivergence}
\label{cantor_set_divergence}
Let $\xi\colon2^\N\times2^\N\ra\R^+_0$ be the Euclidean metric on the Cantor set represented as a subset of the interval $[0,1]$. That is, let
\begin{equation*}
\xi(\nu,\eta)\coloneqq\Bab{\sum_{n\in\N}2\cdot\frac{\nu_n-\eta_n}{3^n}}\hspace{20pt}\text{for any }\nu,\eta\in2^\N.
\end{equation*}
Let also $\f\in\M_1$ be arbitrary. Then in $(2^\N,\xi)$, $\f$ does not diverge on a co-$\sigma$-porous set.
\end{restatable}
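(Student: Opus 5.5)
The plan is to reduce the statement to the argument of Theorem \ref{no_divergent_martingale}: it suffices to show that every $\sigma$-porous set in $(2^\N,\xi)$ has $P$-measure zero. Indeed, since $\f\in\M_1$, Doob's theorem gives $P(Q)=0$ for the divergence set $Q=\s{\nu;\osc f_n(\nu)>0}$. If $Q$ were co-$\sigma$-porous in $(2^\N,\xi)$, then $2^\N\setminus Q$ would be a $\sigma$-porous set of full measure, which is impossible once $\sigma$-porous sets are known to be null. So the whole task is to transfer Lemma \ref{sigma_porous_zero_measure_cantor_lemma} from $\rhoexp$ to $\xi$.

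For this transfer, I would first compare $\xi$ with $\psi_{n\mapsto3^{-n}}$. Let $n$ be the first index where $\nu$ and $\eta$ differ. Then the leading term of the sum has size $2\cdot3^{-n}$, and the tail has size at most $\sum_{k>n}2\cdot3^{-k}=3^{-n}$. This gives
\[
3^{-n}\le\xi(\nu,\eta)\le 3\cdot3^{-n}.
\]
On the other hand, $\psi_{n\mapsto3^{-n}}(\nu,\eta)=3^{-n}$, because the maximum of the decreasing weights over differing indices is attained at the first one. Hence $\psi_{n\mapsto3^{-n}}\le\xi\le3\,\psi_{n\mapsto3^{-n}}$, and Lemma \ref{constant_sigma_porosity} shows that $\xi$ and $\psi_{n\mapsto 3^{-n}}$ have the same porous sets. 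Therefore they also have the same $\sigma$-porous sets.

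Next I would pass from $3^{-n}$ to $2^{-n}$. Note that $\psi_{n\mapsto3^{-n}}=\rhoexp^{\,q}$ with $q=\log_2 3$, since $3^{-n}=(2^{-n})^{q}$; this is a metric because it is an ultrametric of the form $\psi_\varphi$. By Lemma \ref{power_does_not_change_porosity}, a set that is $2a$-porous at $x$ in $(2^\N,\rhoexp^{\,q})$ is $2a^{1/q}$-porous at $x$ in $(2^\N,\rhoexp)$. So a set porous in $\psi_{n\mapsto 3^{-n}}$ is porous in $\rhoexp$, and consequently $\sigma$-porous sets in $\xi$ are $\sigma$-porous in $\rhoexp$. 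Alternatively, Theorem \ref{limit_porosity_criterion_theorem} with $\varphi_1(n)=3^{-n}$ and $\varphi_2(n)=2^{-n}$ gives the same inclusion directly, since $1/3\le1/2$. Lemma \ref{sigma_porous_zero_measure_cantor_lemma} then finishes the argument.

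The only step that needs care is the two-sided comparison $\psi_{n\mapsto3^{-n}}\le\xi\le3\,\psi_{n\mapsto3^{-n}}$, specifically the lower bound, which relies on the tail being strictly dominated by the leading digit difference. This works precisely because the ternary Cantor construction uses only the digits $0$ and $2$. Everything else consists of direct applications of earlier results.
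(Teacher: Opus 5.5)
Your proposal is correct and follows essentially the paper's route. You use the bi-Lipschitz comparison $\psi_{n\mapsto3^{-n}}\le\xi\le3\,\psi_{n\mapsto3^{-n}}$ with Lemma \ref{constant_sigma_porosity}, transfer porosity from $\psi_{n\mapsto3^{-n}}$ to $\rhoexp$ via Theorem \ref{limit_porosity_criterion_theorem} (or Lemma \ref{power_does_not_change_porosity}), and finish with the Doob/null-set argument of Theorem \ref{no_divergent_martingale}. You also make the constants explicit and rightly note that only the one-directional inclusion is needed (porous for $3^{-n}$ implies porous for $2^{-n}$, since $1/3\le1/2$), whereas the paper asserts that the two families coincide.
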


\begin{proof}
As in the proof of Theorem \ref{limit_porosity_criterion_theorem}, we can prove that the co-$\sigma$-porous sets in $(2^\N,\rhoexp)$ and $(2^\N, \psi_{n\mapsto 3^{-n}})$ coincide. Also, it can be easily shown that there is a constant $c>0$ such that $\frac{1}{c}\psi_{n\mapsto 3^{-n}}\leq\xi\leq c\psi_{n\mapsto 3^{-n}}$. Thus, the statement follows from Lemma \ref{constant_sigma_porosity} and Theorem \ref{no_divergent_martingale}.
\end{proof}

\section{Proofs of the main results}
\label{sec:proofs}

This section contains the proofs of all the results from the previous section. It also contains several lemmata along with their proofs.

\begin{lemma}
\label{MinftyIsCoPorous_lemma}
Let $\f\in\M_\infty$ and $r>0$ be arbitrary. Then there exists $\g\in\Mi, \pni{\g-\f}\leq\frac{r}{2}$ such that
\begin{equation}
\label{MinftyIsCoPorous_lemma:g_condition}
\forall\varepsilon>0\exists m_0\in\N\forall m\geq m_0\forall u\in2^m\exists w\in2^{12m}\cap u\appinf:
\ab{g_{2m}(w)-g_{12m}(w)}
\geq r-\varepsilon
.
\end{equation}
\end{lemma}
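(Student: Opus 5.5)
The plan is to take $\g\coloneqq\f+\h$, where $\h$ is a martingale with $\pni{\h}\le r/2$. Its increments are built adaptively from $\f$, so that along a suitably chosen path every increment of $\g$ has the same sign as the increment of $\h$ and is at least as large in absolute value. Then $\h$ only has to be steerable: starting from any node at level $m$, we first drive $\h$ down to (almost) $-r/2$ by level $2m$, and then up to (almost) $+r/2$ by level $12m$. Along that second stretch $\g$ increases by at least as much as $\h$, i.e.\ by nearly $r$.

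\emph{Construction of $\h$.} Put $c_n\coloneqq\frac r2(1-\frac1n)$, which increases to $r/2$, and $\delta_n\coloneqq\frac{2r}{n}$. Set $\h(\es)=0$. We define $\h$ on $2^{n+1}$ from its values on $2^n$ by induction, keeping the invariant $\ab{\h(v)}\le c_n$ for $v\in2^n$. For $v\in2^n$ put
\[
b(v)\coloneqq\min\s{\delta_n,\;c_{n+1}-\ab{\h(v)}}\ge0,
\]
and let $\sigma(v)\in\s{\pm1}$ be the sign of $\f(v\app0)-\f(v)$ (take $+1$ if this difference is $0$). Define $\h(v\app0)\coloneqq\h(v)+\sigma(v)b(v)$ and $\h(v\app1)\coloneqq\h(v)-\sigma(v)b(v)$. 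Then \eqref{eq:prumer} holds for $\h$, the invariant $\ab{\h}\le c_{n+1}$ persists, and hence $\h\in\Mi$ with $\pni{\h}\le r/2$.

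The key identity is as follows. By \eqref{eq:prumer}, the increments of $\f$ at the two children of $v$ are $\pm a$ with $a\ge0$, and those of $\h$ are $\pm b(v)$ with the same signs. Hence the increments of $\g$ at the two children are $\pm(a+b(v))$, and the child on which $\g$ increases (resp.\ decreases) is exactly the child on which $\h$ increases (resp.\ decreases). Therefore along a path on which $\h$ moves monotonically upward, $\g$ gains at least as much as $\h$ gains.

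\emph{Steering.} Let $u\in2^m$. From level $m$ to level $2m$ we always pass to the child on which $\h$ decreases. At each step either $b=\delta_n$, or $b=c_{n+1}-\ab{\h}$. In the second case the new value is exactly $-c_{n+1}$ (if $\h\le0$); if $\h>0$ the step is at least $\min\{\delta_n,c_{n+1}-\h\}$ and moves us down anyway. Once $\h=-c_n$ we stay at $-c_{n'}$ at all later steps, since the bound $c$ increases. Since $\sum_{n=m}^{2m-1}\delta_n\approx 2r\ln2>r$ for large $m$ and the total distance to cover is at most $r$, we reach $\h(u')=-c_{2m}$ for some $u'\in2^{2m}\cap u\appinf$. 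From $u'$ to level $12m$ we always pass to the child on which $\h$ increases. By the same argument, using $\sum_{n=2m}^{12m-1}\delta_n\approx2r\ln6>r$, we end at $w\in2^{12m}\cap u\appinf$ with $\h(w)=c_{12m}$.

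\emph{Conclusion.} By the key identity, $g_{12m}(w)-g_{2m}(w)\ge \h(w)-\h(u')=c_{12m}+c_{2m}$, and this tends to $r$; this gives \eqref{MinftyIsCoPorous_lemma:g_condition} with $m_0$ chosen so that $c_{2m_0}+c_{12m_0}\ge r-\varepsilon$ and the sums of $\delta_n$ above exceed $r$. Also $\pni{\g-\f}=\pni{\h}\le r/2$.

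The main obstacle I expect is the bookkeeping in the steering step. One must check that the clipped step $b=\min\{\delta_n,c_{n+1}-\ab{\h}\}$ never gets stuck before reaching the target. This is why the bound $c_n$ is strictly increasing, which makes $c_{n+1}-\ab{\h}>0$ always. One must also check that, when moving in one direction, crossing $0$ or starting on the wrong side of $0$ still costs at most $r$ in total path length, so that the budgets $\sum\delta_n$ over $[m,2m)$ and $[2m,12m)$ suffice.
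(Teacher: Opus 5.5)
Your construction is the same mechanism as the paper's. There too $\g-\f$ moves by $\pm d^v$, with $+d^v$ put on the child where $\f$ does not decrease, so along a path that follows the increase (decrease) of $\g-\f$ the martingale $\g$ moves at least as far. The paper only uses a different step rule: $d^v$ is half the distance from $\g(v)-\f(v)$ to the nearer of $\pm\frac r2$, instead of your harmonic steps with a moving cap $c_n$. Your construction of $\h$, the bound $\pni{\h}\le r/2$, the key identity and the final estimate $c_{2m}+c_{12m}\to r$ are all fine. The only blemish there is that $c_0,\delta_0$ are undefined; start the recursion with $\h=0$ on $2^{\le 1}$.

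The gap is in the steering step. Your budget argument treats every step as having length $\delta_n$. But when $\h$ sits at or near the cap \emph{opposite} to the target, the step is $c_{n+1}-\ab{\h(v)}$, which can be as small as $c_{n+1}-c_n=\frac{r}{2n(n+1)}$. This cannot be avoided. Phase two starts at $\h(u')=-c_{2m}$, so its first step is exactly $\frac{r}{4m(2m+1)}\ll\delta_{2m}$. In phase one, $\h(u)=c_m$ does occur for some $u\in2^m$, since any path always taking the increasing child is pinned to the cap. Strict monotonicity of $c_n$ only gives $b>0$; it says nothing about whether the steps add up to $r$ by level $2m$ or $12m$.

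What is missing is a quantitative escape estimate. Suppose $a$ is the child moving $\h$ towards $0$, the step is clipped, and $\h(v\app a)$ keeps the sign of $\h(v)$. Then
\[
c_{n+2}-\ab{\h(v\app a)}=2\bp{c_{n+1}-\ab{\h(v)}}+(c_{n+2}-c_{n+1}),
\]
so the slack at least doubles. It starts at least at $c_{m+1}-c_m$, resp.\ $c_{2m+1}-c_{2m}$, so after $O(\log m)$ steps it exceeds $\delta_n$. From then on the step is $\delta_n$ until the target cap is hit exactly. The budgets satisfy
\[
\sum_{n=m+O(\log m)}^{2m-1}\delta_n\ge 2r\ln\tfrac{2m}{m+O(\log m)}\to2r\ln2>r,
\]
and the analogous phase-two sum tends to $2r\ln 6>r$. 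Hence the steering works for all large $m$, and with this added your proof is complete.

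The paper faces the same escape problem: its $d^v$ is also small when moving away from the nearer boundary. It handles it via \eqref{MinftyIsCoPorous_lemma:two_steps_small}, where $2k$ downward steps undo any $k$ prior steps. So within $4m$ further levels one reaches $\g\le\f$ with $\f$ non-increasing, after which the approach to $-\frac r2$ is geometric.
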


\begin{proof}
We construct the desired $\g$ by induction. First, we set $\g(\es)\coloneqq \f(\es)$. Then, we consider any $n\in\N$ such that $g_{\leq n}$ is already constructed, and for any $v\in2^n$, we define
\begin{equation}
\label{MinftyIsCoPorous_lemma:d_def}
d^v\coloneqq\frac{1}{2}\cdot\min\Bs{\bab{\g(v)-\f(v)-\frac{r}{2}},\bab{\g(v)-\f(v)+\frac{r}{2}}}
\end{equation}
and let $a\in\s{0,1}$ be any such that $\f(v\app a)\geq \f(v)$ and $b\coloneqq 1-a$. Then we further define
\begin{align}
\begin{split}
\label{MinftyIsCoPorous_lemma:g}
\g(v\app a)&\coloneqq \f(v\app a)+\g(v)-\f(v)+d^v,\\
\g(v\app b)&\coloneqq \f(v\app b)+\g(v)-\f(v)-d^v.
\end{split}
\end{align}
Thus, the construction is finished. By writing
\begin{align*}
\frac{\g(v\app a)+\g(v\app b)}{2}
&\overeq{=}{\eqref{MinftyIsCoPorous_lemma:g}}\frac{\f(v\app a)+\g(v)-\f(v)+d^v+\f(v\app b)+\g(v)-\f(v)-d^v}{2}\\
&=\g(v)-\f(v)+\frac{\f(v\app a)+\f(v\app b)}{2}=\g(v)-\f(v)+\f(v)=\g(v),
\end{align*}
we get that the constructed function is a
martingale. Furthermore, if $\ab{\f(v)-\g(v)}\leq\frac{r}{2}$ for some $v\in2^{<\N}$, then
\begin{align*}
\bab{\f(v\app a)&-\g(v\app a)}\hspace{2pt}
\overeq{=}{\eqref{MinftyIsCoPorous_lemma:g}}\hspace{2pt}\bab{\f(v\app a)-\bp{\f(v\app a)+\g(v)-\f(v)+d^v}}\\
&=\bab{\f(v)-\g(v)-d^v}\hspace{2pt}
\overeq{\leq}{\eqref{MinftyIsCoPorous_lemma:d_def}}\hspace{2pt}\frac{r}{2}.
\end{align*}
Similarly, we could prove that $\ab{\f(v\app b)-\g(v\app b)}\leq\frac{r}{2}$. Since also $\ab{\f(\es)-\g(\es)}=0$, we may use the principle of mathematical induction to deduce that $\pni{\f-\g}\leq\frac{r}{2}$. 

Now we prove that \eqref{MinftyIsCoPorous_lemma:g_condition} holds. For this, we let $\varepsilon>0$ be arbitrary and set $m_0\coloneqq\cl{\log_{\frac{1}{2}}(\frac{\varepsilon}{r})}$. Let $m\geq m_0$ and $u\in2^m$ be given.
From \eqref{MinftyIsCoPorous_lemma:d_def} and \eqref{MinftyIsCoPorous_lemma:g}, it holds that
\begin{equation*}
\forall v\in2^{<\N}\forall c\in2\exists t\in2^2:\g(v\app c\app t)-\f(v\app c\app t)\leq \g(v)-\f(v),\ \f(v\app c\app t)\leq \f(v\app c).
\end{equation*}
By extending the argument, we may show that given any $k\in\N$, $v\in2^{<\N}$ and $c\in2^k$, there exists $t\in2^{2k}$ such that
\begin{equation*}
\g(v\app c\app t)-\f(v\app c\app t)\leq \g(v)-\f(v)\hspace{20pt}\text{and}\hspace{20pt} \f(v\app c\app t)\leq \f(v\app c).
\end{equation*}
By substituting $v\coloneqq\es$ and using the fact that $\f(\es)=\g(\es)$, we obtain that
\begin{equation}
\label{MinftyIsCoPorous_lemma:two_steps_small}
\forall k\in\N\forall c\in2^k\exists t_1\in2^{2k}: \g(c\app t_1)\leq \f(c\app t_1)\leq \f(c).
\end{equation}
Note that similarly, we could also prove that
\begin{equation*}
\forall k\in\N\forall c\in2^k\exists t_2\in2^{2k}:\g(c\app t_2)\geq \f(c\app t_2)\geq \f(c).
\end{equation*}
Let us now use \eqref{MinftyIsCoPorous_lemma:d_def} and \eqref{MinftyIsCoPorous_lemma:g} to infer that given any $k\in\N$ and $v\in2^k$ such that $\g(v)\geq \f(v)$, there exists $s_1\in2^k$ such that
\begin{equation}
\label{MinftyIsCoPorous_lemma:exists_small}
\g(v\app s_1)\geq \f(v\app s_1)+\frac{r}{2}\cdot(1-2^{-k})\hspace{20pt}\text{and}\hspace{20pt} \f(v\app s_1)\geq \f(v).
\end{equation}
Similarly, given any $v\in2^k$ such that $\g(v)\leq \f(v)$, there exists $s_2\in2^k$ such that
\begin{equation}
\label{MinftyIsCoPorous_lemma:exists_big}
\g(v\app s_2)\leq \f(v\app s_2)-\frac{r}{2}\cdot(1-2^{-k})\hspace{20pt}\text{and}\hspace{20pt} \f(v\app s_2)\leq \f(v).
\end{equation}
Let us now WLOG assume that $\g(u)\geq \f(u)$, the case $\g(u)\leq\f(u)$ is symmetric. Then we may use \eqref{MinftyIsCoPorous_lemma:exists_small} for $v\coloneqq u$ and $k\coloneqq m$ to find $s_1\in2^m$ such that
\begin{equation}
\label{MinftyIsCoPorous_lemma:g_uts}
\g(u\app s_1)\geq \f(u\app s_1)+\frac{r}{2}\cdot(1-2^{-m}).
\end{equation}
Then, by applying \eqref{MinftyIsCoPorous_lemma:two_steps_small} to $c\coloneqq u\app s_1$, we find $t_1\in2^{4m}$ such that
\begin{equation}
\label{MinftyIsCoPorous_lemma:g_utst}
\f(u\app s_1)\geq\f(u\app s_1\app t_1)\geq\g(u\app s_1\app t_1).
\end{equation}
Finally, we use \eqref{MinftyIsCoPorous_lemma:exists_big} for $v\coloneqq u\app s_1\app t_1$ and $k\coloneqq 6m$ to find $s_2\in2^{6m}$ such that
\begin{equation}
\label{MinftyIsCoPorous_lemma:g_utsts}
\g(u\app s_1\app t_1\app s_2)\leq \f(u\app s_1\app t_1)-\frac{r}{2}\cdot(1-2^{-6m})\leq\f(u\app s_1\app t_1)-\frac{r}{2}\cdot(1-2^{-m}).
\end{equation}
By combining \eqref{MinftyIsCoPorous_lemma:g_utst} and \eqref{MinftyIsCoPorous_lemma:g_utsts}, we get that
\begin{equation*}
\g(u\app s_1\app t_1\app s_2)\leq \f(u\app s_1)-\frac{r}{2}\cdot(1-2^{-m}).
\end{equation*}
This, together with \eqref{MinftyIsCoPorous_lemma:g_uts}, gives us that
\begin{equation*}
\ab{g_{2m}(w)-g_{12m}(w)}=\bab{\g(u\app s_1\app t_1\app s_2)-\g(u\app s_1)}\geq r\cdot(1-2^{-m}),
\end{equation*}
where $w\coloneqq u\app s_1\app t_1\app s_2\in2^{12m}$. Thus, we conclude the proof by observing that
\begin{equation*}
r\cdot(1-2^{-m})
\geq r\cdot(1-2^{-m_0})
=r-r\cdot2^{-\cl{\log_{\frac{1}{2}}(\frac{\varepsilon}{r})}}
\geq r-r\cdot2^{-\log_{\frac{1}{2}}(\frac{\varepsilon}{r})}
=r-r\cdot\frac{\varepsilon}{r}
=r-\varepsilon.
\end{equation*}
\end{proof}

\MinftyIsCoPorous*

\begin{proof}
Let $\f\in \M_\infty$ and $r>0$ be arbitrary. We take $\g\in\overline{B(\f,\frac{r}{2})}$ to be that which is guaranteed by Lemma \ref{MinftyIsCoPorous_lemma}. Then we have that given any $k\in\N$, there is some $m_k\in\N$ such that
\begin{equation*}
\forall m\geq m_k\forall u\in2^m\exists w\in2^{12m}\cap u\appinf:\ab{g_{2m}(w)-g_{12m}(w)}\geq r-\frac{1}{k}.
\end{equation*}
Given any such $u$, we denote the corresponding $w$ as $w^u$. For any $k\in\N$, we set
\begin{flalign*}
\mathcal B_k&\coloneqq\bigcup\bs{2^\N\cap(w^u)\appinf;m\geq \max\s{k,m_k}, u\in2^m},&\\
\mathcal B&\coloneqq\bigcap_{k\in\N}\mathcal B_k.&
\end{flalign*}
From the construction, each $\mathcal B_k$ is co-porous in $(2^\N,\rhoharm)$; therefore, $\mathcal B$ is co-$\sigma$-porous. We now show that for any $\eta\in\mathcal B$, $\osc g_n(\eta)\geq r$. In order to do that, let $\varepsilon>0$ be arbitrary. We fix $k_0\coloneqq\cl{1/\varepsilon}$. Then for any $k_1\geq k_0$, we use the fact that $\eta\in\mathcal B_{k_1}$ to find $m\geq \max\s{k_1,m_{k_1}}$ such that $\eta_{\leq 12m}=w^{\eta_{\leq m}}$. Thus,
\begin{equation*}
\ab{g_{2m}(\eta)-g_{12m}(\eta)}=\ab{g_{2m}(w^{\eta_{\leq m}})-g_{12m}(w^{\eta_{\leq m}})}\geq r-\frac{1}{k_1}\geq r-\frac{1}{k_0}=r-\frac{1}{\cl{1/\varepsilon}}\geq r-\varepsilon,
\end{equation*}
which implies $\osc g_n(\eta)\geq r$ since $m\geq k_1\geq k_0$ can be chosen arbitrarily large. 

So the set where the oscillation of $\g$ is at least $r$ contains $\mathcal B$ and is therefore co-$\sigma$-porous. For any $\h\in B(\g,\frac{r}{2})$, let $d\coloneqq\pni{\h-\g}<\frac{r}{2}$. By Lemma \ref{near_martingales_lemma}, for every $\eta\in\mathcal B$ we have $\osc h_n(\eta)> r-2d>0$, which shows that $\h\in R$. Hence,
\begin{equation*}
B\bp{\g,\frac{r}{2}}\subset B(\f,r)\cap R=B(\f,r)\setminus(\M_\infty\setminus R).
\end{equation*}
By Lemma \ref{porosity_condition_lemma}, $\M_\infty\setminus R$ is $1$-porous at $\f$. Since $\f\in\M_\infty$ was arbitrary, $R$ is co-$1$-porous in $\M_\infty$.
\end{proof}

\begin{lemma}
\label{martingales_are_not_sigma_porous_infty_lp_lemma}
Let $p\in[1,\infty)$, $n\in\N$ and $\f\in\M_p$ be arbitrary. Then
\begin{equation*}
    2^{n(p-1)/p}\cdot\pnp{\f}\geq\sum_{u\in2^n}\pnp{\f\uhr_{u\appinf}}.
\end{equation*}
\end{lemma}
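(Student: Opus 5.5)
We prove the inequality by computing each $\pnp{\f\uhr_{u\appinf}}$ explicitly in terms of the ``local'' $L^p$ norm of $\f$ on the cylinder $u\appinf$, and then applying Hölder's inequality to the sum over the $2^n$ cylinders.

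\emph{Step 1: the norm of a restriction.} Fix $u\in2^n$. For $m\ge n$, the definition of $\f\uhr_{u\appinf}$ gives that its $m$-th function equals $f_m$ on $2^\N\cap u\appinf$ and $0$ elsewhere. Since $\pn{\cdot}{L^p}$ of the $m$-th functions is nondecreasing in $m$, the supremum defining the norm can be taken over $m\ge n$. Hence
\begin{equation*}
\pnp{\f\uhr_{u\appinf}}^p=\sup_{m\ge n}\int_{2^\N\cap u\appinf}\ab{f_m}^p\di P=\lim_{m\to\infty}a_u(m),
\qquad a_u(m)\coloneqq\int_{2^\N\cap u\appinf}\ab{f_m}^p\di P .
\end{equation*}
The supremum is a limit because $\ab{f_m}^p$ is a submartingale (Jensen), so each $a_u(m)$ is nondecreasing in $m$.

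\emph{Step 2: summing over cylinders.} By disjointness of the cylinders,
\begin{equation*}
\sum_{u\in2^n}a_u(m)=\pn{f_m}{L^p}^p\le\pnp{\f}^p .
\end{equation*}
Taking $m\to\infty$ and using that each sequence $a_u(m)$ is nondecreasing, we can exchange the finite sum and the limit. This gives
\begin{equation*}
\sum_{u\in2^n}\pnp{\f\uhr_{u\appinf}}^p\le\pnp{\f}^p .
\end{equation*}

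\emph{Step 3: Hölder.} Applying Hölder's inequality (or the power-mean inequality) to the $2^n$ numbers $x_u\coloneqq\pnp{\f\uhr_{u\appinf}}$ yields
\begin{equation*}
\sum_{u}x_u\le(2^n)^{1-1/p}\Bp{\sum_u x_u^p}^{1/p}\le2^{n(p-1)/p}\pnp{\f}.
\end{equation*}
This is exactly the claimed inequality. For $p=1$ the Hölder factor is $1$ and the bound is just Step 2.

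\emph{Main point to check.} The only delicate step is Step 1: verifying that the levels $m<n$ of $\f\uhr_{u\appinf}$, which are defined by averaging via \eqref{eq:prumer}, do not increase the norm. This follows from the monotonicity of $L^p$ norms along a martingale, already noted in the preliminaries. The other ingredient is the monotonicity of each $a_u(m)$, which is what justifies exchanging the limit with the finite sum.
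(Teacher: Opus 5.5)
Your proof is correct and takes essentially the same route as the paper. Both arguments split $\pnp{\f}^p$ into the cylinder contributions $\sum_{u\in2^n}\pnp{\f\uhr_{u\appinf}}^p$ and then apply the power-mean (Jensen/H\"older) inequality to these $2^n$ numbers. The only difference is that you justify the decomposition step via monotonicity of the $L^p$ norms (proving the inequality, which is all that is needed), whereas the paper states it as an equality without comment.
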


\begin{proof}
We prove the lemma using Jensen's inequality:
\begin{align*}
2^{n(p-1)/p}\cdot\pnp{\f}
&=2^{n(p-1)/p}\sqrt[p]{\sum_{u\in2^n}\pnp{\f\uhr_{u\appinf}}^p}
=2^{n(p-1)/p}\cdot2^{n/p}\sqrt[p]{2^{-n}\sum_{u\in2^n}\pnp{\f\uhr_{u\appinf}}^p}\\
&\geq2^{n(p-1)/p}\cdot2^{n/p}\cdot2^{-n}\cdot\sum_{u\in2^n}\sqrt[p]{\pnp{\f\uhr_{u\appinf}}^p}
=\sum_{u\in2^n}\pnp{\f\uhr_{u\appinf}}.
\end{align*}
\end{proof}

\begin{lemma}
\label{m_p_is_co_sigma_porous_lemma}
Let $p\in[1, \infty)$ and $k\geq1$ be arbitrary. Then
\begin{equation*}
\bs{\f\in\M_p;\s{\eta\in2^\N;\osc f_n(\eta)\geq k}\text{ is comeager in }(2^\N,\rhoharm)}
\end{equation*}
is co-$\sigma$-porous in $\M_p$.
\end{lemma}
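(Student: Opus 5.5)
We prove that the complement
\[
N_k\coloneqq\bs{\f\in\M_p;\s{\eta;\osc f_n(\eta)\geq k}\text{ is not comeager in }(2^\N,\rhoharm)}
\]
is $\sigma$-porous in $\M_p$. Suppose $\f\in N_k$. The set $D_k(\f)\coloneqq\s{\eta;\osc f_n(\eta)\geq k}$ is a $G_\delta$: it equals $\bigcap_j\bigcap_n\bigcup_{m,m'\geq n}\s{\eta;\ab{f_m(\eta)-f_{m'}(\eta)}>k-1/j}$, and each piece is clopen. So if it is not comeager, it is not dense. Hence there is $u\in2^{<\N}$ with $D_k(\f)\cap u\appinf=\es$. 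We therefore decompose
\[
N_k\subset\bigcup_{u\in2^{<\N}}\bigcup_{j\in\N}N_{k,u,j},\qquad N_{k,u,j}\coloneqq\bs{\f;\ \osc f_n(\eta)\leq k-\tfrac1j\text{ for all }\eta\in u\appinf\cap2^\N}.
\]
This is countable, so it suffices to show each $N_{k,u,j}$ is porous. The inclusion needs care: $\osc<k$ at every point of the clopen set does not give a uniform gap $1/j$. A cleaner route is to use Theorem~\ref{zajicek_theorem}-type uniformity in the martingale variable instead. Define $N_{k,u}\coloneqq\s{\f;\osc f_n(\eta)<k\ \forall\eta\in u\appinf}$ and show directly that it is porous with a constant depending on $u$ (through $\ab{u}$ and $p$) but not on the center or radius.

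Fix $u$ with $\ab u=n$, a center $\f\in\M_p$ and a radius $r>0$. We want $\g$ with $\pnp{\g-\f}<r/2$ and a ball $B(\g,cr)$ disjoint from $N_{k,u}$, where $c=c(n,p)>0$. Take $\h$ supported on $u\appinf$, i.e.\ $\h=\h\uhr_{u\appinf}$, with $\pnp{\h}$ about $r/4$ and $\osc h_m(\eta)=\infty$ at some point $\eta\in u\appinf$. Such an $\h$ is built as a martingale doing a ``double-or-nothing'' walk along one branch: on the path toward $\eta$ the values grow and oscillate, while off-branch they are compensated. Then $\g\coloneqq\f+t\h$ for a scalar $t\in\s{1,2}$. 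If $\osc f_m(\eta)<\infty$, Lemma~\ref{martingale_sum_oscillation_lemma} (adapted to unbounded oscillation) gives $\osc g_m(\eta)=\infty$. If $\osc f_m(\eta)=\infty$ already, we take $\g=\f$.

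The robustness step is the problem. Lemma~\ref{near_martingales_lemma} is stated for $\pni{\cdot}$, and an $L^p$-small perturbation can destroy infinite oscillation at a single point. So the target should be a property stable under $L^p$ perturbations. We replace pointwise divergence by the following finite-stage statement: for each $\eta$ in a clopen subset $V\subset u\appinf$, there are times $m<m'$ with $\ab{g_m-g_{m'}}\geq k+1$ on $V$. Here $V$ is a cylinder $v\appinf$ with $v\supset u$ and $m,m'$ finite. Any $\h'$ with $\pnp{\h'-\g}<\delta$ satisfies $\ab{h'_{m''}-g_{m''}}\leq\delta P(v)^{-1/p}$ on $v\appinf$ for $m''\leq\ab v$. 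This controls finitely many levels.

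Iterating this inside every sub-cylinder (as in Lemma~\ref{MinftyIsCoPorous_lemma}, using Lemma~\ref{martingales_are_not_sigma_porous_infty_lp_lemma} to bound the $L^p$ cost of the pieces supported on the cylinders of a level) yields $\osc\geq k$ on a dense $G_\delta$ of $v\appinf$. The obstacle is that the allowed perturbation size shrinks with depth, so a single ball radius cannot protect infinitely many levels. I would therefore aim for a weaker target. It is enough that $\h'\notin N_{k,u}$, i.e.\ that some single point of $u\appinf$ has $\osc h'\geq k$. Because $\osc$ at one point depends on infinitely many levels, I would instead enlarge the index family: set
\[
N_{k,u,\ell}\coloneqq\s{\f;\ \forall v\supset u,\ \ab v=\ell,\ \exists\eta\in v\appinf\text{ with }\sup_{m,m'\geq\ell}\ab{f_m(\eta)-f_{m'}(\eta)}<k}.
\]
These sets cover $N_k$, and each becomes porous through the finite-level construction above with $c$ depending on $\ell,p$. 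This last reduction, making the bad set depend on finitely many levels so that $L^p$ closeness suffices, is the step I expect to need the most adjustment.
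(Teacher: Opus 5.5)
Your opening reduction is correct and useful: $\s{\eta;\osc f_n(\eta)\geq k}$ is a $G_\delta$ in $2^\N$, so if it is not comeager it misses some cylinder $u\appinf$. The proof breaks at the last step, however. The sets $N_{k,u,\ell}$ you finally propose are not porous, so no construction can show they are. Suppose $\pnp{\h'}<k2^{-\ell}/3$. Then $\pn{\h'}{1}<k2^{-\ell}/3$, and Doob's maximal inequality gives $P(\sup_m\ab{h'_m}>k/3)<2^{-\ell}$. Hence every cylinder $v\appinf$ with $\ab v=\ell$ contains a point $\eta$ with $\sup_{m,m'}\ab{h'_m(\eta)-h'_{m'}(\eta)}\leq 2k/3<k$. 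So $N_{k,u,\ell}$ contains the ball of radius $k2^{-\ell}/3$ around $0$ in $\M_p$, and its porosity at $0$ is zero.

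The cause is the quantifier order ``for all $v$, there exists $\eta$''. To leave $N_{k,u,\ell}$, a martingale must oscillate by at least $k$ at \emph{every} point of some cylinder of the fixed length $\ell$. That costs norm of order $k2^{-\ell}$, so it cannot happen inside arbitrarily small balls. This is exactly the obstacle you had diagnosed, reintroduced by the definition. In addition, the ``finite-level construction above'' is never actually given; the only one you describe (iterating over all sub-cylinders) is the one you concede fails.

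What is missing is a decomposition whose pieces can be escaped by a perturbation whose cost and robustness radius shrink at the same rate. Your remark that a single point suffices leads to one. Let $N'_{u,\ell}$ consist of those $\f$ with $\sup_{m,m'\geq\ell}\ab{f_m(\eta)-f_{m'}(\eta)}<k$ for \emph{all} $\eta\in u\appinf\cap2^\N$.

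These sets cover the complement by a Baire argument inside the bad cylinder. The sets $\s{\eta;\sup_{m,m'\geq\ell}\ab{f_m(\eta)-f_{m'}(\eta)}\leq k-\frac1i}$ are closed by lower semicontinuity and exhaust that cylinder, so one of them contains a smaller cylinder.

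Each $N'_{u,\ell}$ is porous. Choose $w\in u\appinf$ with $\ab w\geq\ell$, a descendant $s$ of $w$ of length $m$ maximizing $\f$ (so $\f(s)\geq\f(w)$), and its sibling $t$. Add $(1+k)$ times the martingale equal to $1$ on $s\appinf$, $-1$ on $t\appinf$ and $0$ elsewhere. This costs $(1+k)2^{(1-m)/p}$. Any further perturbation of norm below $2^{-m/p}/2$ still leaves a jump of at least $k$ between levels $\ab w$ and $m$ on $s\appinf$. Both quantities are of order $2^{-m/p}$, so choosing $m$ according to $r$ yields a porosity constant independent of $r$.

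The paper uses the same building block $\e^{u,j,\g}$ but organizes it differently. It inserts it into every cylinder of a level $n$ simultaneously. It obtains robustness only in measure, via Lemma~\ref{martingales_are_not_sigma_porous_infty_lp_lemma}: the jump survives on cylinders of total measure at least $1-2^{-n-1}$. It then gets comeagerness for $\h\in\bigcap_nA_n$ by applying the Baire theorem to $\limsup_nG_n$, rather than using Baire in the covering step.
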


\begin{proof}
First, let us denote
\begin{flalign*}
\hspace{20pt}N_{\f,k}&\coloneqq\s{\eta\in2^\N;\osc f_n(\eta)\geq k}\hspace{10pt}\text{for all }\f\in\M_p,&\\
C&\coloneqq\bs{\f\in\M_p;N_{\f,k}\text{ is comeager in }(2^\N,\rhoharm)},&\\
m_{n,j}&\coloneqq \cl{jp}+n+1\hspace{10pt}\text{for all }n,j\in\N.&
\end{flalign*}
We divide the rest of the proof into six steps.

\begin{enumerate}[leftmargin=*]
\item \textbf{Basic Construction.}
For each $n, j\in\N$, $u\in 2^n$, and $\g\in\M_p$, choose a word $s^{u,j,\g}\in 2^{m_{n,j}}\cap u\appinf$ such that
\begin{equation*}
\g(s^{u,j,\g}) = \max\bs{\g(w); w\in 2^{m_{n,j}}\cap u\appinf}.
\end{equation*}
Set $a\coloneqq 1 - s^{u,j,\g}_{m_{n,j}}$ and $t^{u,j,\g}\coloneqq (s^{u,j,\g}_{<m_{n,j}})\app a$. Since $m_{n,j} \geq n+2$, both $s^{u,j,\g}$ and $t^{u,j,\g}$ belong to $u\appinf$. Define $\e^{u,j,\g}\colon 2^{<\N}\to\R$ by
\begin{equation}
\label{m_p_is_co_sigma_porous:e_def}
\e^{u,j,\g}(v)\coloneqq
\begin{cases}
1, & \text{if } v\in (s^{u,j,\g})\appinf,\\
-1, & \text{if } v\in (t^{u,j,\g})\appinf,\\
0, & \text{otherwise}.
\end{cases}
\end{equation}
Since $s^{u,j,\g}$ and $t^{u,j,\g}$ share the same predecessor of length $m_{n,j}-1$, the function $\e^{u,j,\g}$ satisfies \eqref{eq:prumer} and is a martingale. For every $\g\in\M_p$ and $n,j\in\N$, we define
\begin{equation}
\label{m_p_is_co_sigma_porous:f_def}
\f^{\g,n,j} \coloneqq \g + (1+k)\sum_{u\in2^n}\e^{u,j,\g},
\end{equation}
and set $\mathcal B_{n,j}\coloneqq\bs{B\p{\f^{\g,n,j}, 2^{-n-4-j}};\g\in\M_p}$.

\item \textbf{Jumps are certain and don't displace much.}
We prove the following two properties for all $n,j\in\N$, $\g\in\M_p$ and $\h\in B\p{\f^{\g,n,j}, 2^{-n-4-j}}$.
\begin{flalign*}
\tag{I}\label{m_p_is_co_sigma_porous:I}
&P\bp{\bigcup\bs{u\appinf\cap 2^\N; u\in 2^n, \h(s^{u,j,\g}) \geq \h(u)+k}}\geq 1-2^{-n-1},&\\
\tag{II}\label{m_p_is_co_sigma_porous:II}
&B\p{\f^{\g,n,j}, 2^{-n-4-j}} \subset B\p{\g, (1+k)\cdot 2^{-j+1}}.&
\end{flalign*}

For \eqref{m_p_is_co_sigma_porous:I}, let $\h\in B\p{\f^{\g,n,j}, 2^{-n-4-j}}$. Since $\g(u)$ is the average of $\g$ over $2^{m_{n,j}}\cap u\appinf$, the maximality of $\g(s^{u,j,\g})$ gives $\g(s^{u,j,\g})\geq \g(u)$. Moreover, $\e^{u,j,\g}(s^{u,j,\g})=1$ while $\e^{u,j,\g}(u)=0$. Thus,
\begin{equation}
\label{m_p_is_co_sigma_porous:maximality_corollary}
\f^{\g,n,j}(s^{u,j,\g}) - \f^{\g,n,j}(u) = \g(s^{u,j,\g}) + (1+k) - \g(u) \geq 1+k.
\end{equation}
Setting $\mathbf{d}\coloneqq \h-\f^{\g,n,j}$, whenever $u\in 2^n$ satisfies $\h(s^{u,j,\g}) < \h(u)+k$, we have
\begin{equation*}
1 \leq \bp{\f^{\g,n,j}(s^{u,j,\g}) - \h(s^{u,j,\g})} + \bp{\h(u) - \f^{\g,n,j}(u)} \leq |\mathbf{d}(s^{u,j,\g})| + |\mathbf{d}(u)|.
\end{equation*}
Since $n < m_{n,j}$, we have $|\mathbf{d}(s^{u,j,\g})| \leq 2^{m_{n,j}/p}\pnp{\mathbf{d}\uhr_{u\appinf}}$ and $|\mathbf{d}(u)| \leq 2^{n/p}\pnp{\mathbf{d}\uhr_{u\appinf}} \leq 2^{m_{n,j}/p}\pnp{\mathbf{d}\uhr_{u\appinf}}$. Hence,
\begin{equation*}
1 \leq 2 \cdot 2^{m_{n,j}/p}\pnp{\mathbf{d}\uhr_{u\appinf}}.
\end{equation*}
Let $E\coloneqq \{u\in 2^n; \h(s^{u,j,\g}) < \h(u)+k\}$. Using Lemma \ref{martingales_are_not_sigma_porous_infty_lp_lemma},
\begin{align*}
P\bp{\bigcup_{u\in E} u\appinf\cap 2^\N}
&= 2^{-n}|E| \leq 2^{-n}\sum_{u\in 2^n} 2\cdot 2^{m_{n,j}/p}\pnp{\mathbf{d}\uhr_{u\appinf}} \\
&\leq 2^{-n}\cdot 2\cdot 2^{m_{n,j}/p}\cdot 2^{n(p-1)/p}\pnp{\mathbf{d}} = 2\cdot 2^{(m_{n,j}-n)/p}\pnp{\h-\f^{\g,n,j}}\\
&\leq 2^{1 + \frac{\cl{jp}+1}{p}} \cdot 2^{-n-4-j} \leq 2^{1 + j + \frac{2}{p} - n - 4 - j} = 2^{\frac{2}{p} - 3 - n} \leq 2^{-n-1},
\end{align*}
where the last inequality holds since $p \geq 1 \implies \frac{2}{p} - 3 \leq -1$. Since $2^n\setminus E = \{u\in 2^n; \h(s^{u,j,\g}) \geq \h(u)+k\}$, taking complements gives \eqref{m_p_is_co_sigma_porous:I}.

For \eqref{m_p_is_co_sigma_porous:II}, note that the martingales $(\e^{u,j,\g})_{u\in 2^n}$ have pairwise disjoint supports on levels $\geq m_{n,j}$, with $P((s^{u,j,\g})\appinf) = P((t^{u,j,\g})\appinf) = 2^{-m_{n,j}}$. Therefore,
\begin{align*}
\pnp{\f^{\g,n,j}-\g}
&= (1+k)\Bpnp{\sum_{u\in2^n}\e^{u,j,\g}} = (1+k)\sqrt[p]{2^n\cdot 2\cdot 2^{-m_{n,j}}}\\
&= (1+k)\sqrt[p]{2^{n+1-(\cl{jp}+n+1)}} \leq (1+k)2^{-j}.
\end{align*}
Since $2^{-n-4-j} \leq (1+k)2^{-j}$, the triangle inequality gives
\begin{equation*}
B\p{\f^{\g,n,j}, 2^{-n-4-j}} \subset B\p{\g, (1+k)2^{-j} + 2^{-n-4-j}} \subset B\p{\g, (1+k)2^{-j+1}},
\end{equation*}
which proves \eqref{m_p_is_co_sigma_porous:II}.

\item \textbf{Definition of $A_n$.}
We define
\begin{equation}
\label{m_p_is_co_sigma_porous:A_n_definition}
A_n\coloneqq\bigcup_{j\geq n}\bigcup\mathcal B_{n,j},\quad n\in\N.
\end{equation}

\item \textbf{Co-porosity of $A_n$.}
Fix $n\in\N$, $\g\in\M_p$, and $r\in(0,2^{-n})$. Set
\begin{equation}
\label{m_p_is_co_sigma_porous:j_def}
j\coloneqq\min\s{i\in\N;(1+k)2^{-i+1}\leq r}\qquad\text{and}\qquad\alpha\coloneqq\frac{2^{-n-6}}{1+k}.
\end{equation}
Since $r < 2^{-n}$ and $k\geq 1$, we have $(1+k)2^{-n+1} \geq 4\cdot 2^{-n} > r$, so $j > n$. By the minimality of $j$, $(1+k)2^{-j+2} > r$, so $2^{-j} > \frac{r}{4(1+k)}$.

By \eqref{m_p_is_co_sigma_porous:II}, $B(\f^{\g,n,j}, 2^{-n-4-j}) \subset B(\g, (1+k)2^{-j+1}) \subset B(\g, r)$. Moreover,
\begin{equation*}
\alpha r = \frac{2^{-n-6}}{1+k}r < 2^{-n-4}\cdot 2^{-j} = 2^{-n-4-j}.
\end{equation*}
Since $j \geq n$, we have $B(\f^{\g,n,j}, 2^{-n-4-j}) \subset \bigcup \mathcal B_{n,j} \subset A_n$, and therefore
\begin{equation*}
B(\f^{\g,n,j},\alpha r) \subset B(\f^{\g,n,j}, 2^{-n-4-j}) \subset B(\g, r) \cap A_n = B(\g,r)\setminus (\M_p\setminus A_n).
\end{equation*}
Since $\alpha > 0$ depends only on $n$ and $k$, the set $\M_p\setminus A_n$ is $2\alpha$-porous at each point $\g\in\M_p$, showing that $A_n$ is co-porous.

\item \textbf{Comeagerness of divergence sets.}
We show that for every $\h\in\bigcap_{n\in\N}A_n$, the set $N_{\h,k}$ is comeager in $(2^\N,\rhoharm)$. Fix $\h\in\bigcap_{n\in\N}A_n$. For each $n\in\N$, using \eqref{m_p_is_co_sigma_porous:A_n_definition}, we choose $j_n\geq n$ and $\g_n\in\M_p$ such that $\h\in B(\f^{\g_n,n,j_n}, 2^{-n-4-j_n})$. For brevity, we denote $s^{u,n}\coloneqq s^{u,j_n,\g_n}$ and set
\begin{equation*}
U_n \coloneqq \bs{u\in 2^n; \h(s^{u,n})\geq \h(u)+k}.
\end{equation*}
By \eqref{m_p_is_co_sigma_porous:I}, $P\bp{\bigcup_{u\in U_n} u\appinf\cap 2^\N} \geq 1 - 2^{-n-1}$. We define
\begin{equation*}
G_n \coloneqq \bigcup_{u\in U_n} (s^{u,n})\appinf \cap 2^\N.
\end{equation*}
Each $G_n$ is open in $2^\N$. For any $m\in\N$, the set $W_m \coloneqq \bigcup_{n\geq m} G_n$ is open. To show density of $W_m$, let $v\in 2^l$ be an arbitrary finite word. For any $n \geq \max\{m, l\}$,
\begin{equation*}
P\bp{2^\N \setminus \bigcup_{u\in U_n} u\appinf\cap 2^\N} \leq 2^{-n-1} < 2^{-l} = P\bp{v\appinf\cap 2^\N}.
\end{equation*}
Thus, $v\appinf\cap 2^\N$ cannot be contained in the complement of $\bigcup_{u\in U_n} u\appinf\cap 2^\N$, so there exists $w\in 2^n \cap v\appinf$ belonging to $U_n$. For this $w$, we have $(s^{w,n})\appinf \cap 2^\N \subset G_n \subset W_m$, and since $s^{w,n} \in w\appinf \subset v\appinf$, it follows that $W_m \cap (v\appinf \cap 2^\N) \neq \es$. Thus $W_m$ is open and dense in $2^\N$.

By the Baire Category Theorem, $G \coloneqq \bigcap_{m=1}^\infty W_m = \limsup_{n\to\infty} G_n$ is comeager in $2^\N$. If $\eta\in G$, then $\eta\in G_n$ for infinitely many $n\in\N$. Whenever $\eta\in G_n$, there exists $u\in U_n$ such that $\eta_{\leq m_{n,j_n}} = s^{u,n}$ and $\eta_{\leq n} = u$, which yields
\begin{equation*}
h_{m_{n,j_n}}(\eta) - h_n(\eta) = \h(s^{u,n}) - \h(u) \geq k.
\end{equation*}
Since $m_{n,j_n} > n \to \infty$, it follows that $\osc h_l(\eta) \geq k$. Therefore $G \subset N_{\h,k}$, proving that $N_{\h,k}$ is comeager.

\item \textbf{Conclusion.}
From Step 5, we have $\bigcap_{n\in\N}A_n \subset C$, which means
\begin{equation*}
\M_p\setminus C \subset \M_p\setminus \bigcap_{n\in\N}A_n = \bigcup_{n\in\N} (\M_p\setminus A_n).
\end{equation*}
Since each $\M_p\setminus A_n$ is porous by Step 4, $\M_p\setminus C$ is $\sigma$-porous in $\M_p$. Hence $C$ is co-$\sigma$-porous in $\M_p$.
\end{enumerate}
\end{proof}

\MpIsCoSigmaPorous*

\begin{proof}
For each $k\in\N$, let
\begin{equation*}
C_k \coloneqq \bs{\f\in\M_p;\s{\eta\in2^\N;\osc f_n(\eta)\geq k}\text{ is comeager in }(2^\N,\rhoharm)}.
\end{equation*}
By Lemma \ref{m_p_is_co_sigma_porous_lemma}, each $C_k$ is co-$\sigma$-porous in $\M_p$. If $\f\in\bigcap_{k\in\N} C_k$, then for every $k\in\N$ the set $N_{\f,k} = \s{\eta\in2^\N;\osc f_n(\eta)\geq k}$ is comeager in $(2^\N,\rhoharm)$. Consequently, the countable intersection
\begin{equation*}
\bigcap_{k\in\N} N_{\f,k} = \bs{\eta\in2^\N;\osc f_n(\eta)=\infty}
\end{equation*}
is comeager in $(2^\N,\rhoharm)$. Thus $\bigcap_{k\in\N} C_k \subset \bs{\f\in\M_p;\s{\eta\in2^\N;\osc f_n(\eta)=\infty}\text{ is comeager in }(2^\N,\rhoharm)}$. Since the countable intersection of co-$\sigma$-porous sets is co-$\sigma$-porous, the proof is complete.
\end{proof}

\begin{lemma}
\label{mi_pr_not_sigma_porous_lemma}
Let $\f\in\M_\infty$, $m\in\N$, $u\in2^m$ and $n,k\in\N_0$ be arbitrary. Let us further denote
\begin{equation*}
M\coloneqq\bs{v\in2^{l}\cap u\appinf;l\geq m,\forall j\in[m:l]:f_j(v)\geq \f(u)-2^{-n}}
\end{equation*}
and
\begin{equation*}
s\coloneqq\sup\s{\f(v);v\in M}.
\end{equation*}
Then given any $v\in M$ such that $s-\f(v)\leq2^{-k-n}$, it holds that
\begin{equation*}
\forall w\in2^{\leq k}:s-\f(v\app w)\leq2^{-n},\quad\text{in particular}\quad v\app w\in M.
\end{equation*}
\end{lemma}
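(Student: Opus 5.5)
We proceed by induction on the length $|w|$ of $w\in2^{\le k}$, proving the stronger statement that
\[
s-\f(v\app w)\le 2^{|w|-k-n}\quad\text{and}\quad v\app w\in M
\qquad\text{for every } w\in2^{\le k}.
\]
Since $2^{|w|-k-n}\le 2^{-n}$ for $|w|\le k$, this gives the claim. The base case $w=\es$ is exactly the hypothesis $s-\f(v)\le 2^{-k-n}$ together with $v\in M$.

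For the inductive step, take $w\in2^{i}$ with $i<k$, and assume $v\app w\in M$ and $s-\f(v\app w)\le 2^{i-k-n}$. Let $c\in\{0,1\}$ and let $c'=1-c$. By the martingale identity \eqref{eq:prumer},
\[
\f(v\app w\app c)=2\f(v\app w)-\f(v\app w\app c').
\]
The whole step then reduces to showing $\f(v\app w\app c')\le s$. If that holds, we get
\[
s-\f(v\app w\app c)=2\bp{s-\f(v\app w)}+\f(v\app w\app c')-s\le 2\cdot2^{i-k-n}=2^{(i+1)-k-n},
\]
which is the required bound at length $i+1$.

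The main obstacle is the inequality $\f(v\app w\app c')\le s$, because $s$ is a supremum only over $M$, and we do not yet know that $v\app w\app c'\in M$. We would argue as follows, treating both children symmetrically.

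First, if a child $x=v\app w\app c$ satisfies $\f(x)\ge \f(u)-2^{-n}$, then $x\in M$. Indeed, every prefix of $x$ of length in $[m:|x|-1]$ is a prefix of $v\app w\in M$, so it satisfies the defining inequality of $M$; the new level $|x|$ satisfies it by assumption. Hence $\f(x)\le s$.

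So at least one child lies in $M$: both children cannot fall below $\f(u)-2^{-n}$, since their average is $\f(v\app w)\ge \f(u)-2^{-n}$. If both children lie in $M$, both are $\le s$ and the computation above applies to each of them.

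Now suppose one child, say $v\app w\app c'$, lies below $\f(u)-2^{-n}$. Then $\f(v\app w\app c')<\f(u)-2^{-n}\le s$, the last inequality because $u\in M$ gives $s\ge\f(u)$. So we still have $\f(v\app w\app c')\le s$. The bound above then yields
\[
s-\f(v\app w\app c)\le 2^{i+1-k-n}\le 2^{-n}.
\]
For the other child $v\app w\app c$ this is fine. For $v\app w\app c'$ itself, however, we need a contradiction. From the average, $\f(v\app w\app c)=2\f(v\app w)-\f(v\app w\app c')$. Since $v\app w\app c\in M$, we have $\f(v\app w\app c)\le s$. Combining,
\[
\f(v\app w\app c')\ge 2\f(v\app w)-s\ge s-2^{i+1-k-n}\ge s-2^{-n}\ge \f(u)-2^{-n},
\]
which contradicts the assumption that this child was below $\f(u)-2^{-n}$. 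So this case cannot occur, both children are in $M$ and bounded by $s$, and the induction closes.

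Finiteness of $s$ is not an issue: $\f\in\M_\infty$, so $s\le\pni{\f}<\infty$.
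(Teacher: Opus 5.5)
Your proof is correct and follows essentially the paper's argument. The paper inducts on $k$ and applies the hypothesis to the children $v\app a$, while you induct on $|w|$ with the graded bound $s-\f(v\app w)\le 2^{|w|-k-n}$; the core steps are the same in both: every child is at most $s$ (either it lies in $M$, or it lies below $\f(u)-2^{-n}\le s$), and then the identity \eqref{eq:prumer} at most doubles the gap to $s$. Your final case split is redundant, because once both children are known to be $\le s$, each child satisfies $\f(\cdot)\ge s-2^{-n}\ge\f(u)-2^{-n}$ and so lies in $M$ directly.
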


\begin{proof}
We prove the lemma by induction on $k$. For $k=0$, the only word is $w=\es$, and since $v\in M$, it evidently holds that $v\app\es = v\in M$ and $s-\f(v)\le 2^{-n}$.

Let us now assume that the statement holds for some $k\in\N_0$. Let $v\in M$ be such that $s-\f(v)\leq 2^{-(k+1)-n}$. For each $a\in\s{0,1}$, we must have $\f(v\app a)\leq s$ (otherwise $\f(v\app a) > s \geq \f(u) \geq \f(u)-2^{-n}$, implying $v\app a \in M$, which contradicts the definition of $s$). 

By the martingale property, $\f(v) = \frac{1}{2}(\f(v\app 0) + \f(v\app 1))$. Thus, for every $a\in\s{0,1}$,
\begin{equation*}
\f(v\app a) = 2\f(v) - \f(v\app(1-a)) \geq 2(s - 2^{-k-1-n}) - s = s - 2^{-k-n},
\end{equation*}
which means $s - \f(v\app a) \leq 2^{-k-n}$. Since $s \ge \f(u)$, we also have $\f(v\app a) \ge s - 2^{-k-n} \ge \f(u) - 2^{-n}$, so $v\app a \in M$.

Applying the induction hypothesis to each $v\app a \in M$ with parameter $k$, we obtain that for all $w'\in 2^{\leq k}$, $s-\f(v\app a\app w')\leq 2^{-n}$ and $v\app a\app w' \in M$. Since every nonempty $w\in 2^{\leq k+1}$ can be written as $a\app w'$ for some $a\in\s{0,1}$ and $w'\in 2^{\leq k}$, and for $w=\es$ we have $s-\f(v)\leq 2^{-(k+1)-n}\leq 2^{-n}$, the induction step is complete.
\end{proof}

\MiPrNotSigmaPorous*

\begin{proof}
For contradiction, let us assume that $A$ is $\sigma$-porous. We use Corollary \ref{zajicek_corollary} to find sets $A_n$ such that $\bigcup_{n\in\N}A_n=A$ and for each $n\in\N$,
\begin{equation}
\label{mi_pr_not_sigma_porous:porosity_assumption}
\forall x\in\PriSp\forall r\in(0,2^{-n})\exists y\in\PriSp:\overline{B\p{y,2^{-n}\cdot r}}\subset B(x,r)\setminus A_n.
\end{equation}
We will construct a point $z\equiv (\g,\nu)\in\PriSp$ such that 
\begin{equation*}
\osc g_j(\nu)=0\hspace{30pt}\text{and}\hspace{30pt}z\notin\bigcup_{n\in\N}A_n,
\end{equation*}
thereby obtaining a contradiction.

We inductively construct sequences $(m_n)_{n=1}^\infty \subset \N$ with $m_n < m_{n+1}$, $(z_n)_{n=1}^\infty \equiv (\g^n,\nu^n) \in \PriSp$, and $(s_n)_{n=1}^\infty \subset \R$ such that for all $n\in\N$:
\begin{flalign*}
\tag{I}\label{mi_pr_not_sigma_porous:I}
&\overline{B\p{z_{n+1},2^{-m_{n+1}}}}\subset B\p{z_n,2^{-m_n}}\setminus A_n,&\\
\tag{II}\label{mi_pr_not_sigma_porous:II}
&s_{n+1}\geq s_n - 2^{-n+2},&\\
\tag{III}\label{mi_pr_not_sigma_porous:III}
&\forall j\in[m_n:m_{n+1}]:\min\s{s_n, s_{n-1}} - 6\cdot 2^{-n} \leq \g^{n+1}_{j}(\nu^{n+1}) \leq s_n + 2^{-n},&\\
&\hspace{292pt}(n\geq2)\hfill
\end{flalign*}

\textbf{Initialization ($n=1$):} Choose $z_1\equiv(\g^1,\nu^1)\in\PriSp$ arbitrarily, set $m_1\coloneqq 2$, $u_1\coloneqq\nu^1_{\leq m_1}$,
\begin{equation*}
M_1 \coloneqq \bs{v\in 2^l\cap u_1\appinf; l\geq m_1, \forall j\in[m_1:l]: g^1_j(v)\geq \g^1(u_1)-2^{-1}},
\end{equation*}
and $s_1 \coloneqq \sup\{\g^1(w); w\in M_1\}$.

\textbf{Induction step:} Assume $m_n$, $z_n\equiv(\g^n,\nu^n)$, $u_n\coloneqq\nu^n_{\leq m_n}$, $M_n$, and $s_n\coloneqq\sup\{\g^n(w); w\in M_n\}$ are given. By definition of the supremum, there exists $v'\in M_n$ such that $s_n - \g^n(v') \leq 2^{-2n}$. If $|v'| < \max\{m_n, n+1\}$, we can repeatedly extend $v'$: at each step, choose a child $v'\app a$ ($a\in\{0,1\}$) such that $\g^n(v'\app a)\ge \g^n(v')$, which exists by $\g^n(v')=\frac{1}{2}(\g^n(v'\app 0)+\g^n(v'\app 1))$. Since $\g^n(v'\app a)\ge \g^n(v')\ge \g^n(u_n)-2^{-n}$, each extension remains in $M_n$. We thus obtain $v\in 2^l \cap M_n$ with $l\geq \max\{m_n, n+1\}$ satisfying $s_n-\g^n(v)\leq 2^{-2n}$.

Fix any $\tilde{v}\in 2^\N \cap v\appinf$. Applying \eqref{mi_pr_not_sigma_porous:porosity_assumption} to $x\coloneqq (\g^n,\tilde{v})$ and $r\coloneqq 2^{-l} < 2^{-n}$, we obtain $z_{n+1}\equiv (\g^{n+1},\nu^{n+1})\in\PriSp$ such that
\begin{equation}
\label{mi_pr_not_sigma_porous:porosity_corollary}
\overline{B\p{z_{n+1},2^{-n}\cdot 2^{-l}}}\subset B\bp{(\g^n,\tilde{v}),2^{-l}}\setminus A_n.
\end{equation}
Set $m_{n+1}\coloneqq l+n > m_n$. Since $v \in u_n\appinf$, we have $B_{\rhoexp}(\tilde{v}, 2^{-l}) \subset B_{\rhoexp}(\nu^n, 2^{-m_n})$, which establishes \eqref{mi_pr_not_sigma_porous:I}.

Next, define $u_{n+1}\coloneqq\nu^{n+1}_{\leq m_{n+1}}$,
\begin{equation*}
M_{n+1}\coloneqq\bs{y\in 2^{l'}\cap u_{n+1}\appinf; l'\ge m_{n+1}, \forall j\in[m_{n+1}:l']: g^{n+1}_j(y)\ge \g^{n+1}(u_{n+1})-2^{-(n+1)}},
\end{equation*}
and $s_{n+1}\coloneqq\sup\{\g^{n+1}(y); y\in M_{n+1}\}$. Note that $\rhoexp(\nu^{n+1}, \tilde{v}) < 2^{-l}$ implies $\nu^{n+1}_{\leq l} = \tilde{v}_{\leq l} = v$, so $u_{n+1} = v \app w$ for some $w\in 2^n$. Applying Lemma \ref{mi_pr_not_sigma_porous_lemma} with $k\coloneqq n$, we get $s_n - \g^n(u_{n+1})\leq 2^{-n}$. Since $\|\g^{n+1}-\g^n\|_\infty \leq 2^{-l} \leq 2^{-n-1}$,
\begin{equation*}
\g^{n+1}(u_{n+1}) \geq \g^n(u_{n+1}) - 2^{-n-1} \geq s_n - 2^{-n} - 2^{-n-1} = s_n - 3\cdot 2^{-n-1} \geq s_n - 2^{-n+1}.
\end{equation*}
Since $u_{n+1}\in M_{n+1}$ trivially, $s_{n+1} \geq \g^{n+1}(u_{n+1}) \geq s_n - 2^{-n+2}$, which proves \eqref{mi_pr_not_sigma_porous:II}.

To verify \eqref{mi_pr_not_sigma_porous:III} for $n\geq 2$, observe that $\|\g^{n+1}-\g^n\|_\infty\leq 2^{-l}\leq 2^{-n}$ and $\g^n(u_n) \geq s_{n-1}-2^{-n+2}$ from \eqref{mi_pr_not_sigma_porous:II} at the previous step. For $j\in[m_n:l]$, the prefix $v_{\leq j}\in M_n$, so
\begin{equation*}
\min\s{s_n, s_{n-1}}-5\cdot 2^{-n} \leq \g^n(u_n)-2^{-n} \leq \g^n_j(\nu^{n+1}) \leq s_n.
\end{equation*}
For $j\in[l:m_{n+1}]$, Lemma \ref{mi_pr_not_sigma_porous_lemma} yields $s_n - 2^{-n} \leq \g^n_j(\nu^{n+1}) \leq s_n$. Combining both intervals with $\|\g^{n+1}-\g^n\|_\infty \leq 2^{-n}$ yields \eqref{mi_pr_not_sigma_porous:III}.

\textbf{Convergence and contradiction:} By completeness of $\PriSp$ and \eqref{mi_pr_not_sigma_porous:I}, there is a point
\begin{equation*}
(\g,\nu)\equiv z\in\bigcap_{n\in\N}\overline{B(z_n,2^{-m_n})}.
\end{equation*}
From \eqref{mi_pr_not_sigma_porous:I}, $\overline{B(z_{n+1},2^{-m_{n+1}})} \subset B(z_n,2^{-m_n})\setminus A_n$, so $z\notin\bigcup_{n\in\N}A_n$.

Since $\|\g^n\|_\infty \leq \|\g^1\|_\infty + \sum_{k=1}^\infty 2^{-m_k} \leq \|\g^1\|_\infty + 1/2$, the sequence $(s_n)$ is bounded from above. By \eqref{mi_pr_not_sigma_porous:II}, $b_n \coloneqq s_n - \sum_{k=n}^\infty 2^{-k+2} = s_n - 2^{-n+3}$ is non-decreasing and bounded from above, hence converges to a finite limit $S\in\R$. Thus $s_n \to S$.

For any $n\geq 2$, $z\in \overline{B(z_{n+2}, 2^{-m_{n+2}})} \subset B(z_{n+1}, 2^{-m_{n+1}})$, so $\rhoexp(\nu, \nu^{n+1}) < 2^{-m_{n+1}}$. This implies $\nu_{\leq m_{n+1}} = \nu^{n+1}_{\leq m_{n+1}}$, and thus $g^{n+1}_j(\nu) = g^{n+1}_j(\nu^{n+1})$ for all $j\leq m_{n+1}$. Moreover, $\|\g - \g^{n+1}\|_\infty \leq 2^{-m_{n+1}} \leq 2^{-n}$. Combining this with \eqref{mi_pr_not_sigma_porous:III}, for all $j\in[m_n:m_{n+1}]$:
\begin{equation*}
\min\s{s_n, s_{n-1}} - 7\cdot 2^{-n} \leq g_j(\nu) \leq s_n + 2\cdot 2^{-n}.
\end{equation*}
Taking $j\to\infty$ forces $n\to\infty$. By the squeeze theorem, $\lim_{j\to\infty} g_j(\nu) = S$, so $\osc g_j(\nu) = 0$. Hence $z\in A$, which contradicts $z\notin \bigcup_{n\in\N}A_n$.
\end{proof}

\begin{lemma}
\label{limit_porosity_criterion_lemma}
Let $\varphi\colon\N\ra\R^+$ be a decreasing function such that $\varphi(n)\ra0$ and
\begin{equation}
\label{limit_porosity_criterion_lemma:contradiction_assumption}
\forall q>0\forall n_0\in\N\exists n\geq n_0:\varphi(n+1)<q\cdot\varphi(n).
\end{equation}
Also, let us consider any $\es\neq A\subset2^\N$. Then $A$ is not porous in $(2^\N,\psi_\varphi)$.
\end{lemma}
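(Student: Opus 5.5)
The plan is to pick a single point $x\in A$, which exists because $A\neq\es$, and to show that $p(A,x)=0$ in $(2^\N,\psi_\varphi)$. Since porosity of $A$ requires porosity at every point of $2^\N$, this proves that $A$ is not porous. The whole argument rests on describing open balls in the ultrametric $\psi_\varphi$ as cylinders, and then evaluating $\gamma(A,R,x)$ along the radii $R=\varphi(n)$ for the indices $n$ supplied by \eqref{limit_porosity_criterion_lemma:contradiction_assumption}.

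\textbf{Balls are cylinders.} For $\eta\neq\nu$ we have $\psi_\varphi(\eta,\nu)=\varphi(i)$, where $i$ is the first index with $\eta_i\neq\nu_i$; this uses that $\varphi$ is decreasing. Hence if $\varphi(n+1)<\rho\le\varphi(n)$, then $B(y,\rho)=2^\N\cap (y_{\le n})\appinf$. In particular, $B(x,\varphi(n))$ consists exactly of the points agreeing with $x$ on the first $n$ coordinates. If $\varphi$ is not strictly decreasing, several indices can give the same radius. This only makes the cylinder for a given radius longer, and I would handle it by always choosing the largest admissible $n$; the inequalities below are unaffected.

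\textbf{Main estimate.} Fix $n$ and suppose $B(y,\rho)\subset B(x,\varphi(n))\setminus A$ for some $y\in 2^\N$ and $\rho>0$. Since $y\in B(x,\varphi(n))$, the point $y$ agrees with $x$ on the first $n$ coordinates. Suppose $\rho>\varphi(n+1)$. Then $B(y,\rho)$ contains every point agreeing with $y$ on the first $n$ coordinates, so it contains $x\in A$, which is a contradiction. Therefore $\rho\le\varphi(n+1)$, and so
\[
\gamma\bp{A,\varphi(n),x}\le\varphi(n+1).
\]
The bound holds for every $n$, and its proof uses only $x\in A$.

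\textbf{Conclusion.} By \eqref{limit_porosity_criterion_lemma:contradiction_assumption} applied with $q=1/k$ for each $k\in\N$, there is a strictly increasing sequence $n_k$ with $\varphi(n_k+1)<\varphi(n_k)/k$. Since $\varphi(n_k)\to0$, the radii $R_k\coloneqq\varphi(n_k)$ tend to $0^+$, and
\[
\frac{\gamma(A,R_k,x)}{R_k}\le\frac{\varphi(n_k+1)}{\varphi(n_k)}<\frac1k .
\]
Hence $\liminf_{R\to0^+}\gamma(A,R,x)/R=0$, so $p(A,x)=0$ and $A$ is not porous at $x$. The only delicate point is the cylinder description of balls when $\varphi$ has plateaus, which is why the largest admissible index must be used there. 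Otherwise the argument is direct.
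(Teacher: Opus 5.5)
Your proof is correct and follows essentially the same route as the paper. Both fix a point $x\in A$, use the radii $R=\varphi(n)$ along indices where $\varphi(n+1)/\varphi(n)$ is small, and observe that every ball inside $B(x,\varphi(n))$ with radius larger than $\varphi(n+1)$ contains $x$. The differences are cosmetic: you bound $\gamma(A,\varphi(n),x)\le\varphi(n+1)$ directly from the definition rather than going through Lemma~\ref{porosity_condition_lemma} with $\alpha=2q$, $\varepsilon=q$, and your worry about plateaus of $\varphi$ is unnecessary, since $\psi_\varphi(x,y)\le\varphi(n+1)$ holds whenever $y$ agrees with $x$ on the first $n$ coordinates, for any non-increasing $\varphi$.
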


\begin{proof}
Let us consider any $\nu\in A$ and $q\in(0,1/4)$. We show that $A$ is not $4q$-porous at $\nu$. To apply Lemma \ref{porosity_condition_lemma}, we set $\alpha\coloneqq 2q$ and $\varepsilon\coloneqq q$. Let $r_0>0$ be arbitrary. By \eqref{limit_porosity_criterion_lemma:contradiction_assumption}, there exists $n\in\N$ such that 
\begin{equation}
\label{limit_porosity_criterion_lemma:contradiction_corollary}
\varphi(n)<r_0\hspace{20pt}\text{and}\hspace{20pt}\varphi(n+1)<q\cdot\varphi(n).
\end{equation}
For $r\coloneqq \varphi(n) < r_0$ and any $\eta\in B_{\psi_\varphi}(\nu,\varphi(n))$, we have $\eta_{\leq n} = \nu_{\leq n}$. Hence $\psi_\varphi(\eta,\nu) \leq \varphi(n+1) < q\cdot\varphi(n)$, which implies
\begin{equation*}
\nu \in B_{\psi_\varphi}\bp{\eta, q\cdot\varphi(n)} = B_{\psi_\varphi}\bp{\eta, (\alpha-\varepsilon)r}.
\end{equation*}
Since $\nu\in A$, no ball $B_{\psi_\varphi}(\eta, (\alpha-\varepsilon)r) \subset B_{\psi_\varphi}(\nu, r)$ can be disjoint from $A$. By Lemma \ref{porosity_condition_lemma}, $A$ is not $4q$-porous at $\nu$. Since $q>0$ was arbitrarily small, $p(A,\nu)=0$, so $A$ cannot be porous on $2^\N$.
\end{proof}

\LimitPorosityCriterionTheorem*

\begin{proof}
Let $A\subset2^\N$ be $2c$-porous in $(2^\N,\psi_1)$ for some $c>0$. If $A=\es$, it is trivially porous in $(2^\N,\psi_2)$. In the sequel, we assume $A\neq\es$.

By Lemma \ref{limit_porosity_criterion_lemma}, since $A$ is porous in $(2^\N,\psi_1)$, $\varphi_1$ cannot satisfy \eqref{limit_porosity_criterion_lemma:contradiction_assumption}. Combining this with \eqref{limit_porosity_criterion_theorem:varphi12relation}, there exists $q\in(0,1)$ such that
\begin{equation}
\label{limit_porosity_criterion_theorem:exp_comparison}
\forall i\in\s{1,2}\forall n\in\N:\varphi_i(n+1)\geq q\varphi_i(n).
\end{equation}
We show that $A$ is $2q^2c$-porous in $(2^\N,\psi_2)$ using Lemma \ref{porosity_condition_lemma}. Let $\nu\in2^\N$ and $\varepsilon\in(0,q^2c)$ be arbitrary. Since $A$ is $2c$-porous in $(2^\N,\psi_1)$, there exists $r_1>0$ such that
\begin{equation}
\label{limit_porosity_criterion_theorem:psi1_porosity_corollary}
\forall r\in(0,r_1)\exists\eta\in2^\N:B_{\psi_1}\bp{\eta,r\p{c-\varepsilon}}\subset B_{\psi_1}\p{\nu,r}\setminus A.
\end{equation}
Let $n_1\in\N$ be such that $\varphi_1(n_1)\leq \min\s{r_1, \varphi_1(n_0)}(c-\varepsilon)$, and set $r_0\coloneqq \min\s{\varphi_2(n_1),\varphi_2(n_0)}$.

Let $r\in(0,r_0)$ be arbitrary. We define
\begin{equation*}
    n_2\coloneqq\max\s{n\in\N;q^2(c-\varepsilon)r\leq \varphi_2(n)}.
\end{equation*}
By \eqref{limit_porosity_criterion_theorem:exp_comparison}, $\varphi_2(n_2) \leq \frac{1}{q}\varphi_2(n_2+1) \leq q(c-\varepsilon)r$. Since $r < \varphi_2(n_1)$, we have $n_2 \ge n_1$. Next, set
\begin{equation*}
    n_3\coloneqq\max\Bs{n\in\N;\frac{\varphi_1(n_2)}{c-\varepsilon}\leq\varphi_1(n)}.
\end{equation*}
Since $\frac{\varphi_1(n_2)}{c-\varepsilon} \leq \frac{\varphi_1(n_1)}{c-\varepsilon} \leq \varphi_1(n_0)$, we have $n_0 \leq n_3 \leq n_2$. By \eqref{limit_porosity_criterion_theorem:exp_comparison}, $\varphi_1(n_3) \leq \frac{\varphi_1(n_2)}{q(c-\varepsilon)}$. Multiplying \eqref{limit_porosity_criterion_theorem:varphi12relation} from $n_3$ to $n_2-1$ yields $\frac{\varphi_2(n_3)}{\varphi_2(n_2)} \leq \frac{\varphi_1(n_3)}{\varphi_1(n_2)}$, which gives
\begin{equation*}
    \varphi_2(n_3) \leq \varphi_2(n_2)\frac{\varphi_1(n_3)}{\varphi_1(n_2)} \leq \frac{\varphi_2(n_2)}{q(c-\varepsilon)} \leq r.
\end{equation*}
Applying \eqref{limit_porosity_criterion_theorem:psi1_porosity_corollary} for $r'\coloneqq \frac{\varphi_1(n_2)}{c-\varepsilon} \leq r_1$, there exists $\eta\in 2^\N$ such that
\begin{align*}
B_{\psi_2}\p{\eta,(q^2c-\varepsilon) r}
&\subset B_{\psi_2}\p{\eta,q^2(c-\varepsilon) r}
\subset B_{\psi_2}\p{\eta,\varphi_2(n_2)}
= B_{\psi_1}\p{\eta,\varphi_1(n_2)}\\
&\overeq{\subset}{\eqref{limit_porosity_criterion_theorem:psi1_porosity_corollary}} B_{\psi_1}\p{\nu,\frac{\varphi_1(n_2)}{c-\varepsilon}}\setminus A
\subset B_{\psi_1}\p{\nu,\varphi_1(n_3)}\setminus A\\
&= B_{\psi_2}\p{\nu,\varphi_2(n_3)}\setminus A
\subset B_{\psi_2}\p{\nu,r}\setminus A.
\end{align*}
By Lemma \ref{porosity_condition_lemma}, $A$ is $2q^2c$-porous in $(2^\N,\psi_2)$.
\end{proof}

\section{Acknowledgements}

Our thanks belong to Prof. Jiří Spurný for suggesting the topic, for reading some of the proofs and for his countless remarks.

This research did not receive any specific grant from funding agencies in the public, commercial, or not-for-profit sectors.

\bibliographystyle{acm}
\bibliography{martingales}

\end{document}